\title{Metric currents and the Poincar\'e inequality}
\keywords{Metric current, Poincar\'e inequality, doubling measure, pencil of curves}
\author{Katrin F\"assler and Tuomas Orponen}
\subjclass[2010]{30L99 (Primary) 49Q15, 28A75 (Secondary)}
\thanks{{K.F. was supported by Swiss National Science Foundation via the project \emph{Intrinsic rectifiability and mapping theory on the Heisenberg group}, grant no. $161299$.} T.O. was supported by the Academy of Finland via the Research Fellowship project \emph{Quantitative rectifiability in Euclidean and non-Euclidean spaces}, grant no. 309365.}
\address{Department of Mathematics\\ University of Fribourg \\ Chemin du Mus\'{e}e 23, CH-1700 Fribourg, Switzerland}
\email{katrin.faessler@unifr.ch}
\address{Department of Mathematics and Statistics, University of Helsinki, Gustaf H\"allstr\"ominkatu 2b, 00014 University of Helsinki, Finland}
\email{tuomas.orponen@helsinki.fi}
\newcommand{\R}{\mathbb{R}}
\newcommand{\N}{\mathbb{N}}
\newcommand{\Q}{\mathbb{Q}}
\newcommand{\Z}{\mathbb{Z}}
\newcommand{\calV}{\mathcal{V}}
\newcommand{\calD}{\mathcal{D}}
\newcommand{\calH}{\mathcal{H}}
\newcommand{\calN}{\mathcal{N}}
\newcommand{\calB}{\mathcal{B}}
\newcommand{\calS}{\mathcal{S}}
\newcommand{\calU}{\mathcal{U}}
\newcommand{\spt}{\operatorname{spt}}
\newcommand{\diam}{\operatorname{diam}}
\newcommand{\card}{\operatorname{card}}
\newcommand{\dist}{\operatorname{dist}}
\newcommand{\Lip}{\textup{Lip}}
\def\Barint_#1{\mathchoice
          {\mathop{\vrule width 6pt height 3 pt depth -2.5pt
                  \kern -8pt \intop}\nolimits_{#1}}%
          {\mathop{\vrule width 5pt height 3 pt depth -2.6pt
                  \kern -6pt \intop}\nolimits_{#1}}%
          {\mathop{\vrule width 5pt height 3 pt depth -2.6pt
                  \kern -6pt \intop}\nolimits_{#1}}%
          {\mathop{\vrule width 5pt height 3 pt depth -2.6pt
                  \kern -6pt \intop}\nolimits_{#1}}}
\numberwithin{equation}{section}
\theoremstyle{plain}
\newtheorem{thm}[equation]{Theorem}
\newtheorem{lemma}[equation]{Lemma}
\newtheorem{ex}[equation]{Example}
\newtheorem{cor}[equation]{Corollary}
\theoremstyle{definition}
\newtheorem{definition}[equation]{Definition}
\theoremstyle{remark}
\newtheorem{remark}[equation]{Remark}
\begin{document}

\begin{abstract} We show that a complete doubling metric space $(X,d,\mu)$ supports a weak $1$-Poincar\'e inequality if and only if it admits a \emph{pencil of curves} (PC) joining any pair of points $s,t \in X$. This notion was introduced by S. Semmes in the 90's, and has been previously known to be a sufficient condition for the weak $1$-Poincar\'e inequality.

Our argument passes through the intermediate notion of a \emph{generalised pencil of curves} (GPC). A GPC joining $s$ and $t$ is a normal $1$-current $T$, in the sense of Ambrosio and Kirchheim, with boundary $\partial T = \delta_{t} - \delta_{s}$, support contained in a ball of radius $\sim d(s,t)$ around $\{s,t\}$, and satisfying $\|T\| \ll \mu$, with
\begin{displaymath} \frac{d\|T\|}{d\mu}(y) \lesssim \frac{d(s,y)}{\mu(B(s,d(s,y)))} + \frac{d(t,y)}{\mu(B(y,d(t,y)))}. \end{displaymath}
We show that the $1$-Poincar\'e inequality implies the existence of GPCs joining any pair of points in $X$. Then, we deduce the existence of PCs from a recent decomposition result for normal $1$-currents due to Paolini and Stepanov.
\end{abstract}
\maketitle

\section{Introduction}

Let $(X,d,\mu)$ be a complete metric space, where $\mu$ is a doubling locally finite Borel measure. It is known, see for example \cite{MR1683160, MR3363168}, that plenty of analysis can be conducted on $(X,d,\mu)$ whenever the \emph{weak $p$-Poincar\'e inequality}
\begin{equation}\label{PPoincare} \fint_{B} |u - u_{B}| \, d\mu \leq C \diam(B) \left( \fint_{\lambda B} \rho^{p} \, d\mu \right)^{1/p} \end{equation}
is satisfied for some $C,p,\lambda \geq 1$, for all balls $B\subset X$, for all locally integrable Borel functions $u \colon X \to \R$, and for all upper gradients $\rho$ of $u$. So, it is worthwhile to find necessary and sufficient conditions for the validity of \eqref{PPoincare}. One well-known sufficient condition is the existence of \emph{pencils of curves}, introduced by Semmes \cite{MR1414889} in the $90$'s. To motivate the results in the present paper, we first discuss Semmes' condition in some detail; our definition is the one given in Section 14.2 in \cite{MR3363168}, where the setting is somewhat more general than in Semmes' original work \cite{MR1414889}.
\begin{definition}[Pencils of curves]\label{PC} The space $(X,d,\mu)$ admits \emph{pencils of curves} (PC) if there exists a constant $C_0 \geq 1$ with the following property. For all distinct $s,t\in X$ there is a family $\Gamma_{s,t}$ of rectifiable curves $\gamma \subset B(s,C_0 d(s,t))$, each joining $s$ to $t$ and satisfying $\calH^{1}(\gamma) \leq C_{0}d(s,t)$, and a probability measure $\alpha_{s,t}$ on $\Gamma_{s,t}$ such that
\begin{displaymath}
\int_{\Gamma_{s,t}}\int_{\gamma} g \; d\calH^{1} \, d\alpha_{s,t}(\gamma) \leq C_0 \int_{B(s,C_0 d(s,t))} \frac{g(y)}{\Theta(s,d(s,y))} + \frac{g(y)}{\Theta(t,d(t,y))} \, d\mu(y).
\end{displaymath}
for Borel functions $g: X \to [0,\infty]$. Here, and in the sequel, $\Theta$ stands for the $1$-dimensional density
\begin{displaymath} \Theta(x,r) = \frac{\mu(B(x,r))}{r}, \qquad x \in X, \: r > 0. \end{displaymath}
\end{definition}

A doubling space $(X,d,\mu)$ admitting PCs satisfies the weak $1$-Poincar\'e inequality. This was proven by Semmes for $Q$-regular spaces, see \cite[Theorem B.15] {MR1414889}, and the general case can be found for instance in Heinonen's book \cite[Chapter 4]{MR1800917}.

Of course, Semmes in \cite{MR1414889} also gives sufficient conditions for finding PCs: his Standard Assumptions (see \cite[Theorem 1.11]{MR1414889} and above) include the space $(X,d,\mu)$ to be an orientable topological $n$-manifold, with $\mu = \calH^{n}$. Moreover, $X$ has to be locally contractible (for more precise statements, see \cite[Definition 1.7]{MR1414889} or \cite[Definition 1.15]{MR1414889}, but also the discussion in \cite[Remark A.35]{MR1414889}). These assumptions are certainly not necessary for a space $(X,d,\mu)$ to admit PCs or support a Poincar\'e inequality; notably, the Laakso spaces \cite{MR1748917} have PCs, hence satisfy \eqref{PPoincare} with $p = 1$, but are generally not integer-dimensional.

The main result of our paper shows that, in complete doubling metric measure spaces, the weak $1$-Poincar\'e inequality implies the existence of PCs. We achieve this by passing through an intermediate notion, the \emph{generalised pencils of curves}.

\begin{definition}[Generalised pencils of curves]\label{GPC} The space $(X,d,\mu)$ admits \emph{generalised pencils of curves} (GPC in short) if there exists a constant $C_{0} \geq 1$ with the following property. For all distinct $s,t \in X$, there exists a normal $1$-current $T$ on $X$ (in the sense of Ambrosio and Kirchheim) satisfying the following three properties:
\begin{itemize}
\item[(P1)] $\partial T = \delta_{t} - \delta_{s}$,
\item[(P2)] $\spt T \subset B(s,C_{0}d(s,t))$, and
\item[(P3)] $\|T\| = w \, d\mu$ with
\begin{displaymath} |w(y)| \leq C_{0}\left[\frac{1}{\Theta(s,d(s,y))} + \frac{1}{\Theta(t,d(t,y))}\right] \qquad \text{for } \mu\text{-a.e. } y \in X. \end{displaymath}
\end{itemize}
\end{definition}

The main novelty of the present paper is the following result:
\begin{thm}\label{main} Let $(X,d,\mu)$ be complete and doubling. Then $X$ satisfies \eqref{PPoincare} with $p = 1$ if and only if $X$ admits generalised pencils of curves. \end{thm}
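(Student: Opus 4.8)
The plan is to prove the two implications separately, and the implication that a space admitting generalised pencils of curves satisfies \eqref{PPoincare} with $p=1$ is the easier one, so let me dispose of it first. Fix distinct $s,t\in X$, let $T=T_{s,t}$ be a current as in (P1)--(P3), and decompose $T=\widehat{T}+C$ with $\partial C=0$ and $\widehat{T}$ acyclic, so that $\partial\widehat{T}=\partial T=\delta_{t}-\delta_{s}$ and $\|\widehat{T}\|\leq\|T\|$. By the decomposition theorem of Paolini and Stepanov, $\widehat{T}=\int\llbracket\gamma\rrbracket\,d\eta(\gamma)$ for a measure $\eta$ on curves, the decomposition being ``without cancellation'' both in mass, $\int_{X}\rho\,d\|\widehat{T}\|=\int\bigl(\int_{\gamma}\rho\,d\calH^{1}\bigr)\,d\eta(\gamma)$ for $\rho\geq 0$, and in boundary; since $\partial^{-}\widehat{T}=\delta_{s}$ and $\partial^{+}\widehat{T}=\delta_{t}$ this forces $\eta$ to be a probability measure with $\eta$-a.e.\ $\gamma$ running from $s$ to $t$ inside $\spt T\subset B(s,C_{0}d(s,t))$. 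Choosing the representative of $u$ for which $\rho$ is an upper gradient, one has $|u(t)-u(s)|\leq\int_{\gamma}\rho\,d\calH^{1}$ for every rectifiable $\gamma$, whence
\begin{equation*}
 |u(t)-u(s)|\ \leq\ \int_{X}\rho\,d\|\widehat{T}\|\ \leq\ \int_{X}\rho\,d\|T\|\ \leq\ C_{0}\int_{X}\rho(y)\Bigl[\tfrac{1}{\Theta(s,d(s,y))}+\tfrac{1}{\Theta(t,d(t,y))}\Bigr]\,d\mu(y).
\end{equation*}
Integrating over $s,t\in B$ and using the elementary bound $\frac{1}{\mu(B)^{2}}\int_{B}\int_{B}\bigl[\Theta(s,d(s,y))^{-1}+\Theta(t,d(t,y))^{-1}\bigr]\,d\mu(s)\,d\mu(t)\lesssim\diam(B)/\mu(B)$ (dyadic splitting of the inner integral, plus doubling) yields \eqref{PPoincare} with $p=1$, the dilation constant $\lambda$ coming from (P2). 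Alternatively, after discarding the curves longer than $\sim d(s,t)$ -- which carry at most half the $\eta$-mass, since $\int\calH^{1}(\gamma)\,d\eta=\|\widehat{T}\|(X)\lesssim d(s,t)$ by (P3) and doubling -- one recovers an honest pencil of curves in the sense of Definition~\ref{PC} and may invoke the classical implication PC $\Rightarrow$ \eqref{PPoincare}.

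For the converse, assume \eqref{PPoincare} with $p=1$ and fix distinct $s,t\in X$; put $R=d(s,t)$ and $B=B(s,\Lambda R)$, where $\Lambda\geq 1$ depends only on the doubling and Poincar\'e data, to be fixed. The first step is the \emph{dual pencil estimate}
\begin{equation}\label{eq:dualest}
 |\pi(t)-\pi(s)|\ \leq\ C_{1}\int_{B}g(y)\Bigl[\tfrac{1}{\Theta(s,d(s,y))}+\tfrac{1}{\Theta(t,d(t,y))}\Bigr]\,d\mu(y)
\end{equation}
for every $\pi\in\Lip(B)$ and every upper gradient $g$ of $\pi$ in $B$, with $C_{1}$ depending only on the data. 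This is the telescoping consequence of \eqref{PPoincare}: the Haj\l asz--Koskela pointwise estimate gives $|\pi(x)-\pi_{B(x,r)}|\lesssim\int_{B(x,\Lambda'r)}g(y)\,\Theta(x,d(x,y))^{-1}\,d\mu(y)$; applying this with $x\in\{s,t\}$ and $r=R$, and controlling the leftover $|\pi_{B(s,R)}-\pi_{B(t,R)}|$ by \eqref{PPoincare} on the comparable ball $B(s,3R)$, produces \eqref{eq:dualest} once $\Lambda$ is large enough that all the balls appearing lie inside $B$; here one also uses $\Theta(s,d(s,y))^{-1}+\Theta(t,d(t,y))^{-1}\gtrsim d(s,t)/\mu(B)$ on $B$, valid because $\max\{d(s,y),d(t,y)\}\geq R/2$ there.

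The second step -- and the crux -- is to pass from \eqref{eq:dualest} to an honest normal $1$-current. Observe that \eqref{eq:dualest} is exactly the condition \emph{necessary} for the existence of a normal $1$-current $T$ with $\spt T\subset\overline{B}$, $\partial T=\delta_{t}-\delta_{s}$ and $\|T\|\leq C_{1}w\,\mu$, where $w(y)=\Theta(s,d(s,y))^{-1}+\Theta(t,d(t,y))^{-1}$: by the locality estimate for the action of a normal current, $\pi(t)-\pi(s)=\partial T(\pi)=T(1,\pi)\leq\int g\,d\|T\|\leq C_{1}\int gw\,d\mu$ for a suitable upper gradient $g$ of $\pi$ (its pointwise Lipschitz constant). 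The plan is to prove it also \emph{sufficient}, by a Hahn--Banach / minimax argument: the class of metric $1$-currents supported in the compact set $\overline{B}$ with $\|T\|\leq C_{1}w\mu$ and $\|\partial T\|\leq M$ is convex and weak-$\ast$ compact (lower semicontinuity of mass and of boundary mass, Ambrosio--Kirchheim), one minimises over it -- or over all normal currents with boundary $\delta_{t}-\delta_{s}$ supported in $\overline{B}$ -- a weak-$\ast$ lower semicontinuous penalty combining the mass excess $(\|T\|-C_{1}w\mu)_{+}$ and the boundary defect $\partial T-(\delta_{t}-\delta_{s})$, and \eqref{eq:dualest}, fed into the separation of the convex set $\{\partial T:\spt T\subset\overline{B},\ \|T\|\leq C_{1}w\mu\}$ from $\delta_{t}-\delta_{s}$, forces both the excess and the defect of a minimiser to vanish; the resulting $T$ is then a generalised pencil joining $s,t$ with constant $C_{0}=\max\{\Lambda,C_{1}\}$. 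A more concrete variant: first replace $d$ by the bi-Lipschitz length metric (legitimate, since \eqref{PPoincare} forces quasiconvexity and (P1)--(P3) are stable under comparable metrics), then refine the telescoping into a \emph{thick family of chains of balls} with expected occupation measure $\lesssim w\mu$, join consecutive centres by geodesics to obtain normal currents $T_{k}$ at dyadic scales with $\partial T_{k}=\delta_{t}-\delta_{s}$, uniformly bounded mass $\leq C_{1}w\mu$ and support in $\overline{B}$, and extract a weak-$\ast$ limit by Ambrosio--Kirchheim compactness.

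I expect this second step to be the genuine obstacle: metric currents admit no naive $L^{1}$--$L^{\infty}$ duality, the weak-$\ast$ compact classes of currents need an a priori bound on the boundary mass, and one must check that the separation/optimality argument really closes for the specific weight $w$ and with the support constraint in force. (In the curve picture this obstacle resurfaces as the delicacy of the $p=1$ modulus--measure duality on spaces of curves, which is exactly what the Paolini--Stepanov decomposition lets the first implication sidestep.) Everything else -- the telescoping estimate \eqref{eq:dualest}, the dyadic weight computations, the reduction to a geodesic metric, and the bookkeeping with Lebesgue points and the good representative of $u$ -- I regard as routine.
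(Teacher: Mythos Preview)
Your ``if'' direction is correct, though it takes the indirect route $\text{GPC}\Rightarrow\text{PC}\Rightarrow\text{weak }1\text{-PI}$ via Paolini--Stepanov, whereas the paper gives a direct argument using the estimate $|\partial T(u)|\leq\int\textup{Lip}(u,\cdot)\,d\|T\|$ for Lipschitz $u$ (proved by decomposing $X$ into pieces where $\textup{Lip}(u,\cdot)$ is almost constant and using locality of currents). Both are fine; the paper even mentions your route as an alternative.

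The ``only if'' direction, however, has a genuine gap, and you identify it yourself. Your dual pencil estimate \eqref{eq:dualest} is correct and is indeed the right starting point, but the passage from \eqref{eq:dualest} to an actual normal $1$-current is not carried out. The Hahn--Banach/minimax scheme you sketch runs into exactly the obstacles you list: the space of metric $1$-currents with $\|T\|\leq C_{1}w\mu$ is not obviously the dual of anything useful, the functional $\pi\mapsto\int\textup{Lip}(\pi,\cdot)\,w\,d\mu$ is sublinear but the set of boundaries $\{\partial T:\|T\|\leq C_{1}w\mu\}$ is not a priori weak-$\ast$ closed in the $0$-currents without a bound on $\|\partial T\|$, and even granting compactness the separating functional produced by Hahn--Banach is a bounded function, not a Lipschitz one, so \eqref{eq:dualest} does not immediately rule it out. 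Your ``more concrete variant'' (chains of balls, geodesics between centres, compactness) is closer to a viable strategy, but the phrase ``thick family of chains of balls with expected occupation measure $\lesssim w\mu$'' hides the entire difficulty: a single telescoping chain has the wrong occupation measure (concentrated on one path), and you give no mechanism for producing a \emph{distribution} over chains whose average occupation is controlled by $w\mu$.

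The paper's solution to this is a genuinely different idea: it discretises not into chains but into a \emph{weighted graph} $G_{n}$ on an $r_{n}$-net, with edge capacities $c_{n}(x,x')\sim\Theta(x,r_{n})/\Theta(s,d(s,x))+\Theta(x',r_{n})/\Theta(t,d(t,x'))$, and invokes the \emph{max-flow min-cut theorem} of Ford--Fulkerson. The Poincar\'e inequality enters only to show that every cut $(\calS,\calS^{c})$ has capacity $\gtrsim 1$: one builds a Lipschitz function $u=\sum_{x\in\calS}\psi_{x}$ from a partition of unity, observes $u(s)=1$, $u(t)=0$, and $\textup{Lip}(u,\cdot)$ supported on the $r_{n}$-neighbourhood of the cut, then plugs into \eqref{eq:dualest}. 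The resulting maximal flow is exactly the ``thick family'' you were looking for, and pushing it forward along geodesics between adjacent net points gives the approximating currents $T_{n}$. This combinatorial detour is the missing ingredient in your argument.
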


It turns out that the existence of GPCs implies the existence of PCs. This is a consequence of a recent decomposition result for normal $1$-currents, due to Paolini and Stepanov \cite{MR2984069}. Combined with Theorem \ref{main}, we obtain the following characterisation.

\begin{thm}\label{mainPCs} Let $(X,d,\mu)$ be complete and doubling. Then $X$ satisfies \eqref{PPoincare} with $p = 1$ if and only if $X$ admits pencils of curves. \end{thm}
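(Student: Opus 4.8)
The plan is to combine Theorem \ref{main} with the decomposition theorem of Paolini and Stepanov. One direction is immediate: if $X$ admits pencils of curves (PC), then $X$ is doubling and — by the classical result of Semmes/Heinonen quoted above — satisfies \eqref{PPoincare} with $p=1$. So the real content is the forward direction, which by Theorem \ref{main} reduces to the following claim: if $(X,d,\mu)$ is complete and doubling and admits generalised pencils of curves (GPC), then it admits pencils of curves (PC).

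To prove that claim, fix distinct $s,t\in X$ and let $T$ be the normal $1$-current furnished by the GPC property, so that $\partial T=\delta_t-\delta_s$, $\spt T\subset B(s,C_0 d(s,t))$, and $\|T\|=w\,d\mu$ with the pointwise bound (P3). I would first replace $T$ by its "acyclic part": the Paolini--Stepanov decomposition (\cite{MR2984069}) writes any normal $1$-current as $T=T_a+T_c$, where $T_c$ is a cycle ($\partial T_c=0$) and $T_a$ is acyclic with $\partial T_a=\partial T$, $\|T_a\|\le\|T\|$ and $\|\partial T_a\|\le\|\partial T\|$; so after this step we may assume $T$ itself is acyclic. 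The key structural theorem of Paolini--Stepanov then provides a representation of an acyclic normal $1$-current as a "superposition of curves": there is a finite Borel measure $\eta$ on the space of (constant-speed, compact) Lipschitz curves such that $T=\int \llbracket\gamma\rrbracket\,d\eta(\gamma)$ and, crucially, $\|T\|=\int \gamma_\#(\mathcal H^1\lfloor\gamma)\,d\eta(\gamma)$ holds \emph{without cancellation} — the masses add up. Because $\partial T=\delta_t-\delta_s$ and $T$ is acyclic, $\eta$-a.e.\ curve $\gamma$ runs from $s$ to $t$; this is exactly the place where acyclicity is needed, to rule out the measure being supported on loops or on curves with "wrong" endpoints.

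From here the remaining work is bookkeeping to massage $(\eta,\{\gamma\})$ into the data $(\alpha_{s,t},\Gamma_{s,t})$ required by Definition \ref{PC}. First, $\|\partial T\|(X)=2$ forces $\eta$ to be a finite measure; normalising, $\alpha:=\eta/\|\eta\|$ is a probability measure, and the total-mass identity gives $\int \mathcal H^1(\gamma)\,d\alpha(\gamma)=\mathbf M(T)/\|\eta\|\lesssim \mathbf M(T)$, which by (P3) and the doubling property (the standard estimate $\int_{B(s,R)} d(s,y)/\mu(B(s,d(s,y)))\,d\mu(y)\lesssim R$, proved by dyadic annular decomposition) is $\lesssim d(s,t)$. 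In particular $\mathcal H^1(\gamma)<\infty$ for $\alpha$-a.e.\ $\gamma$, so a.e.\ curve is rectifiable; and $\int\mathcal H^1(\gamma)\,d\alpha\lesssim d(s,t)$. To get the \emph{uniform} length and support bounds $\mathcal H^1(\gamma)\le C_0 d(s,t)$ and $\gamma\subset B(s,C_0 d(s,t))$ for \emph{every} curve in the family (not just a.e.), I would discard the $\alpha$-null set of bad curves and, if necessary, truncate: by Chebyshev the set $\{\gamma:\mathcal H^1(\gamma)>\Lambda d(s,t)\}$ has $\alpha$-measure $\le C/\Lambda$, so for $\Lambda$ large we keep at least half the mass; one then either renormalises the restricted measure (the pencil inequality is stable under passing to $\alpha\lfloor \text{good set}$ up to a factor $2$) or, more cleanly, observes that $\spt T\subset B(s,C_0 d(s,t))$ already forces $\eta$-a.e.\ curve into that ball since the length measure $\gamma_\#(\mathcal H^1\lfloor\gamma)$ of a curve leaving the ball would charge mass outside $\spt T$. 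Finally, the pencil inequality itself is just the representation formula read against a Borel $g\ge 0$:
\begin{displaymath}
\int_{\Gamma_{s,t}}\int_\gamma g\,d\mathcal H^1\,d\alpha(\gamma)=\frac{1}{\|\eta\|}\int g\,d\|T\|=\frac{1}{\|\eta\|}\int g\,w\,d\mu\le C_0\int_{B(s,C_0 d(s,t))}\Big[\frac{g(y)}{\Theta(s,d(s,y))}+\frac{g(y)}{\Theta(t,d(t,y))}\Big]\,d\mu(y),
\end{displaymath}
using $\|\eta\|\ge$ a constant (indeed $\|\eta\|\ge c>0$ since $\mathbf M(T)\ge d(s,t)$ by the boundary constraint, so $1/\|\eta\|$ contributes only a constant) and (P3).

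The main obstacle is the passage from the abstract superposition/decomposition statement to curves with the \emph{correct endpoints and uniform geometric bounds}. The Paolini--Stepanov machinery is stated for normal currents in fairly general metric spaces, but one must check its hypotheses apply to the Ambrosio--Kirchheim currents here (completeness of $X$ and finiteness of $\mathbf M(T)$ and $\mathbf M(\partial T)$ suffice), and — more delicately — that acyclicity plus $\partial T=\delta_t-\delta_s$ genuinely pins down $\eta$-a.e.\ curve as a simple (or at least non-looping) arc from $s$ to $t$; curves in the decomposition are a priori only Lipschitz, possibly with backtracking, so one may need to reparametrise or excise loops (again using acyclicity, or the Paolini--Stepanov refinement that produces injective curves) to land inside $B(s,C_0 d(s,t))$ with length $\le C_0 d(s,t)$. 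Everything else — the annular doubling estimate, Chebyshev truncation, renormalisation — is routine.
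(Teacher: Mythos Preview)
Your approach is essentially the same as the paper's: reduce to GPC $\Rightarrow$ PC via Theorem \ref{main}, pass to the acyclic part, apply the Paolini--Stepanov decomposition into arcs, read off the endpoints from $(\partial T)^{\pm}$, use the mass identity \eqref{form25} for the pencil inequality, and Chebyshev-truncate for the uniform length bound. One small correction: your justification ``$\|\eta\|\ge c$ since $\mathbf M(T)\ge d(s,t)$'' does not work as stated (the mass identity only gives $\|\eta\|\le \mathbf M(T)/d(s,t)$, the wrong direction); the clean way---and what the paper does---is to observe directly from the endpoint formula $\eta(0)=(\partial T)^{-}=\delta_{s}$ that $\eta(\Gamma)=\eta(0)(X)=1$, so $\eta$ is already a probability measure and no normalisation is needed.
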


\begin{remark} The first version of this paper only contained Theorem \ref{main}, as we were not aware of the decomposition result of Paolini and Stepanov. Shortly afterwards, Theorem \ref{mainPCs} was obtained by Durand-Cartagena, Eriksson-Bique, Korte, and Shanmugalingam \cite[Theorem 3.7]{2018arXiv180903861D}. Their proof uses the modulus of curve families instead of metric currents. So, Theorem \ref{mainPCs} first appeared in \cite{2018arXiv180903861D}. \end{remark}

The structure of the paper is the following. In Section \ref{currentBackground}, we briefly recall the definition of, and some basic concepts related to, the metric currents of Ambrosio and Kirchheim. Then, in Section \ref{twoToOne}, we prove the easy ``if'' implication
\begin{displaymath} \text{GPCs} \quad \Longrightarrow \quad \text{weak } 1\text{-PI} \end{displaymath}
of Theorem \ref{main}, mostly using classical methods in metric analysis. We chose to retain a direct proof of this implication, as it indicates how GPCs can be applied in practice. Another proof would be
\begin{displaymath} \text{GPCs} \quad \Longrightarrow \quad \text{PCs} \quad \Longrightarrow \quad \text{weak } 1\text{-PI}, \end{displaymath}
where the first implication follows from Paolini and Stepanov's work. As explained above, the second implication follows from \cite[Theorem B.15] {MR1414889} in the $Q$-regular case, and in full generality from \cite[Chapter 4]{MR1800917}.

 Section \ref{oneToTwo} is the core of the paper, containing the proof of the ``only if'' implication of Theorem \ref{main}. In short, the idea is to translate the problem of finding currents in $(X,d,\mu)$ to finding ``network flows'' in certain graphs derived from $\delta$-nets in $X$. The existence of such flows is guaranteed by the famous \emph{max flow - min cut} theorem of Ford and Fulkerson \cite{MR2729968}. Then, the main task will be to verify that there are no ``small cuts'' in the graph, and this can be done by using the weak $1$-Poincar\'e inequality. Finally, in Section \ref{s:final}, we explain how to deduce Theorem \ref{mainPCs} from Theorem \ref{main} using the results of Paolini and Stepanov.

\subsection{Basic notation} Open balls in a metric space $(X,d)$ will be denoted by $B(x,r)$, with $x \in X$ and $r > 0$. A \emph{measure} on $(X,d)$ will always refer to a Borel measure $\mu$ with $\mu(B(x,r)) < \infty$ for all balls $B(x,r) \subset X$. The notation $A \lesssim B$ means that there exists a constant $C \geq 1$ such that $A \leq CB$: the constant $C$ will typically depend on the "data" of the ambient space, for example the doubling constant of $\mu$, or the constant in the Poincar\'e inequality \eqref{PPoincare} (whenever \eqref{PPoincare} is assumed to hold). The two-sided inequality $A \lesssim B \lesssim A$ is abbreviated to $A \sim B$.

\section{Acknowledgements}

T.O. is grateful to David Bate and Sean Li for a week of discussions in the summer of 2018, which greatly inspired this project. We are particularly grateful to David Bate for pointing out that the main result might work in all doubling spaces, and not just $Q$-regular ones.

\section{Background on currents}\label{currentBackground}

The main result in the paper mentions currents in metric spaces, so we include here a brief introduction. We claim no originality for anything in this section. We use the definition of metric currents given by Ambrosio and Kirchheim, see Definition 3.1 in \cite{MR1794185}.

Let $X$ be a complete metric space, and let $\textup{Lip}(X)$ and $\textup{Lip}_{b}(X)$ be the families of Lipschitz, and bounded Lipschitz functions on $X$. For $k \geq 1$, Let $\calD^{k}(X) := \Lip_{b}(X) \times [\Lip(X)]^{k}$. We typically denote the $(k + 1)$-tuples in $\calD^{k}(X)$ by $(f,\pi_{1},\ldots,\pi_{k})$. Following Definition 2.2 in \cite{MR1794185}, we consider subadditive, positively $1$-homogenous functionals $T \colon \calD^{k}(X) \to \R$. These are denoted by $MF_{k}(X)$. We say $T \in MF_{k}(X)$ has \emph{finite mass}, if there exists a finite Borel measure $\nu$  on $X$ such that
\begin{equation}\label{mass} |T(f,\pi_{1},\ldots,\pi_{k})| \leq \prod_{j = 1}^{k} \textup{Lip}(\pi_{j}) \int_{X} |f| \, d\nu, \qquad (f,\pi_{1},\ldots,\pi_{k}) \in \calD^{k}(X). \end{equation}
For $k = 0$, the correct interpretation of \eqref{mass} is $|T(f)| \leq \int |f| \, d\nu$. As in Definition 2.6 in \cite{MR1794185}, the minimal measure $\nu$ satisfying \eqref{mass} is denoted by $\|T\|$ (this is well-defined, as discussed below (2.2) in \cite{MR1794185}).

A \emph{$k$-dimensional current}, or just a \emph{$k$-current}, is then a $(k + 1)$-multilinear functional $T \in MF_{k}(X)$ with finite mass, satisfying a few additional requirements which we will not need explicitly, see Definition 3.1 in \cite{MR1794185}. If $T$ is a $k$-current, so that $\|T\|$ is a finite Borel measure, then bounded Lipschitz functions are dense in $L^{1}(X,\|T\|)$, and in particular the space of bounded Borel functions $B(X)$ equipped with the $L^{1}(\|T\|)$-norm. This fact, and \eqref{mass}, together imply that $T$ has a canonical extension to $B(X) \times [\Lip(X)]^{k}$, which we also denote by $T$.

We review a few basic concepts related to currents.

\begin{definition}[Support] The support $\spt(T)$ of a $k$-current $T$ is the usual measure-theoretic support of $\|T\|$, namely
\begin{displaymath} \spt \|T\| = \{x \in X : \|T\|(B(x,r)) > 0 \text{ for all } r > 0\}. \end{displaymath}
\end{definition}


\begin{definition}[Boundary] Let $T \in MF_{k}(X)$, $k \geq 1$. Then $\partial T \in MF_{k - 1}(X)$ is the functional defined by
\begin{displaymath} \partial T(f,\pi_{1},\ldots,\pi_{k - 1}) = T(1,f,\pi_{1},\ldots,\pi_{k}). \end{displaymath}
A $k$-current $T$ is called \emph{normal}, if $\partial T$ is a $(k - 1)$-current, in particular, $\partial T$ has finite mass.
\end{definition}

\begin{definition}[Subcurrent]\label{subcurrent}
A $k$-current $S$ is a subcurrent of a $k$-current $T$, denoted $S\leq T$, if
\begin{displaymath}
\|T-S\|(X) + \|S\|(X) \leq \|T\|(X).
\end{displaymath}
\end{definition}

\begin{definition}[Cycle]
A $k$-current $C$ is a cycle of a $k$-current $T$ if $C\leq T$ and $\partial C =0$.
\end{definition}

\begin{definition}[Acyclic current]\label{d:acyclic} A current $T$ is acyclic if $C=0$ is its only cycle.
\end{definition}

\begin{definition}[Push-forward]\label{pushforward} Let $X,Y$ be complete metric spaces, and let $\varphi \colon X \to Y$ be Lipschitz. For $T \in MF_{k}(X)$, we define the functional $\varphi_{\sharp}T \in MF_{k}(Y)$ by
\begin{displaymath} \varphi_{\sharp}T(g,\pi_{1},\ldots,\pi_{k}) = T(g \circ \varphi,\pi_{1} \circ \varphi,\ldots,\pi_{k} \circ \varphi). \end{displaymath}
\end{definition}


{The following is a special instance of Definition 2.5 in \cite{MR1794185}.
 \begin{definition}[Restriction]\label{d:restr} Let $X$ be a complete metric space, $T \in MF_{1}(X)$, and $g\in \mathrm{Lip}_b(X)$. Then we define an element $ T\lfloor_{ g} \in MF_1(X)$ by setting
 \begin{displaymath}
 T\lfloor_{ g}(f,\pi_1):= T(fg,\pi_1).
 \end{displaymath}
\end{definition}
If $T$ is a $1$-current, then Definition \ref{d:restr} can be extended to $g\in B(X)$ using the canonical extension of $T$ to $B(X) \times \mathrm{Lip}(X)$, see \cite[p.11]{MR1794185}. Moreover, in that case, $T\lfloor_{ g}$ is again a current, see \cite[p.16 and p.19]{MR1794185}. If $E$ is a Borel subset of $X$ and $g=\chi_E$, we write $T\lfloor_{ E}$ for the restriction $T\lfloor_{ g}$.
We have
\begin{equation}\label{eq:restr_meas}
\left|T\lfloor_{E}(f,\pi_1)\right|= \left|T(f\chi_E,\pi_1)\right|\leq \mathrm{Lip}(\pi_1) \int_{E} |f| d\|T\|
\end{equation}
for all $(f,\pi_1)\in \mathcal{D}^1(X)$.
Since $\chi_E$ is merely Borel but not Lipschitz, \eqref{eq:restr_meas} does not follow directly from the definition of $\|T\|$ given by \eqref{mass}, but it can be deduced by the density argument alluded to earlier, see \cite[(2.3)]{MR1794185}. Finally,  \eqref{eq:restr_meas} and the minimality of $\|T\lfloor_{E}\|$ show that
\begin{displaymath}
\mathrm{spt} \left(T\lfloor_{E}\right)\subseteq \mathrm{spt}\left( \|T\|\lfloor_E\right) \subseteq \overline{E}.
\end{displaymath}
}

We record the following lemma, which follows from general measure theory:

\begin{lemma}\label{regularityOfMass} Assume that $T$ is a $k$-current on a $\sigma$-compact metric space $X$. Then, for any Borel set $B \subset X$ and any $\epsilon > 0$, there exists a compact set $K \subset B$ with $\|T\|(B \setminus K) < \epsilon$.
\end{lemma}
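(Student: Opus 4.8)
The plan is to reduce the statement to two standard facts: first, that every finite Borel measure $\nu$ on a metric space is inner regular by closed sets; and second, that in a $\sigma$-compact space one can upgrade ``closed'' to ``compact''. Since $T$ is a $k$-current, $\|T\|$ is by definition a finite Borel measure on $X$, so both facts apply with $\nu = \|T\|$, and the lemma follows at once.

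For the first fact I would run the usual ``good sets'' argument. Let $\mathcal{A}$ be the family of Borel sets $A \subset X$ such that for every $\epsilon > 0$ there exist a closed set $F$ and an open set $U$ with $F \subset A \subset U$ and $\nu(U \setminus F) < \epsilon$. The family $\mathcal{A}$ is closed under complementation by the symmetry of its defining condition. It is closed under countable unions: if $A_i \in \mathcal{A}$ with $F_i \subset A_i \subset U_i$ and $\nu(U_i \setminus F_i) < \epsilon 2^{-i}$, then $U := \bigcup_i U_i$ is open, each $F_1 \cup \cdots \cup F_N$ is closed, and since $\nu$ is finite, continuity from below gives $\nu\big(U \setminus (F_1 \cup \cdots \cup F_N)\big) < 2\epsilon$ for $N$ large. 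Finally $\mathcal{A}$ contains every closed set $F$: take $F$ itself together with the open neighbourhoods $U_n := \{x : \dist(x,F) < 1/n\}$, which decrease to $F$ because $F$ is closed, so $\nu(U_n \setminus F) \to 0$ by continuity from above (again using $\nu(X) < \infty$). Hence $\mathcal{A}$ contains the Borel $\sigma$-algebra, which is exactly inner regularity by closed sets.

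For the second fact, write $X = \bigcup_n X_n$ with each $X_n$ compact; replacing $X_n$ by $X_1 \cup \cdots \cup X_n$ we may assume $X_n \uparrow X$. Given a Borel set $B \subset X$ and $\epsilon > 0$, the first fact yields a closed set $C \subset B$ with $\|T\|(B \setminus C) < \epsilon/2$. Then $C \cap X_n$ is a closed subset of the compact set $X_n$, hence compact, and $C \cap X_n \uparrow C$, so continuity from below gives an index $n$ with $\|T\|\big(C \setminus (C \cap X_n)\big) < \epsilon/2$. Setting $K := C \cap X_n$, we obtain a compact set $K \subset B$ with $\|T\|(B \setminus K) \le \|T\|(B \setminus C) + \|T\|(C \setminus K) < \epsilon$.

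I do not expect any real obstacle here; the only point requiring a little care is that closed subsets of a $\sigma$-compact metric space need not be compact, which is precisely why one intersects with the exhausting sequence $X_n$. Alternatively, one may simply invoke the well-known fact that finite Borel measures on $\sigma$-compact (indeed, on separable complete) metric spaces are Radon, which packages both steps simultaneously.
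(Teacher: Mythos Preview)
Your proof is correct and follows the same approach as the paper: the paper simply observes that $\|T\|$ is a finite Borel measure and then cites a standard reference (\cite[Theorem 1.10]{zbMATH01249699}) for the inner regularity of such measures on $\sigma$-compact metric spaces, whereas you spell out the standard ``good sets'' argument and the compact-exhaustion step that underlie that reference.
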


\begin{proof} By assumption $\|T\|$ is a finite Borel measure. The claim now follows from \cite[Theorem 1.10]{zbMATH01249699}, and the Note directly below it. \end{proof}

We next describe a simple example, which will be useful later on.
\begin{ex}\label{intervalExample} Given a non-degenerate interval $[a,b] \subset \R$ we may define the $1$-current $\llbracket a,b \rrbracket$ as follows:
\begin{displaymath} \llbracket a,b \rrbracket(f,\pi) = \int_{a}^{b} f(t)\pi'(t) \, dt, \end{displaymath}
where $(f,\pi) \in \Lip_{b}(\R) \times \Lip(\R)$. This is a particular case of Example 3.2 in \cite{MR1794185}, and it is noted there that $\| \llbracket a,b \rrbracket \| = \calH^{1}|_{[a,b]}$. The boundary of $\llbracket a,b \rrbracket$ is the measure (or $0$-current) $\delta_{b} - \delta_{a}$, as shown by the following computation:
\begin{displaymath} \partial \llbracket a,b \rrbracket(f) = \llbracket a,b \rrbracket(1,f) = \int_{a}^{b} f'(t) \, dt = f(b) - f(a) = \int f \, d[\delta_{b} - \delta_{a}]. \end{displaymath}
Next, consider an isometric embedding $\gamma \colon [a,b] \to X$, where $X$ is any complete metric space. Then $\gamma_{\sharp} \llbracket a,b \rrbracket$ defines a current in $X$ given by (spelling out Definition \ref{pushforward})
\begin{displaymath} \gamma_{\sharp}\llbracket a,b \rrbracket(f,\pi) = \llbracket a,b \rrbracket(f \circ \gamma, \pi \circ \gamma) = \int_{a}^{b} f(\gamma(t))(\pi \circ \gamma)'(t) \, dt. \end{displaymath}
It is noted in \cite[(2.6)]{MR1794185}, and in the discussion directly below, that
\begin{equation}\label{form28} \|\gamma_{\sharp}\llbracket a,b \rrbracket \| = \gamma_{\sharp} \llbracket a,b \rrbracket = \gamma_{\sharp}(\calH^{1}\lfloor_{[a,b]}) = \calH^{1}\lfloor_{\gamma([a,b])}. \end{equation}
The last equation follows from the isometry assumption. Finally, because boundary and push-forward commute by \cite[(2.1)]{MR1794185}, we have
\begin{equation}\label{form27} \partial (\gamma_{\sharp} \llbracket a,b \rrbracket) = \gamma_{\sharp}(\partial \llbracket a,b \rrbracket) = \delta_{\gamma(b)} - \delta_{\gamma(a)}. \end{equation}
\end{ex}
We end the section by recalling (a slightly simplified) version of the compactness theorem for normal currents. The original reference, and the full version of the theorem, is \cite[Theorem 5.2]{MR1794185}.
\begin{thm}[Compactness]\label{compactness}
Let $(T_{n})_{n \in \N}$ be a sequence of normal $k$-currents with
\begin{displaymath} \sup_{n} \left( \|T_{n}\|(X) + \|\partial T_{n}\|(X) \right) < \infty, \end{displaymath}
and such that $\spt T_{n} \subset K$ for some fixed compact set $K \subset X$, for all $n \in \N$. Then there exists a subsequence $(T_{n_{m}})_{km\in \N}$, and a normal $k$-current $T$ supported on $K$, such that
\begin{displaymath} \lim_{m \to \infty} T_{n_{m}}(f,\pi_{1},\ldots,\pi_{k}) = T(f,\pi_{1},\ldots,\pi_{k}), \qquad (f,\pi_{1},\ldots,\pi_{k}) \in \Lip_{b}(X) \times [\Lip(X)]^{k}. \end{displaymath}
\end{thm}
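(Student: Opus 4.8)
The plan is to recall, with the minor simplifications afforded by our hypotheses, the proof of \cite[Theorem 5.2]{MR1794185}; its core is a diagonal extraction. Two features keep it from being routine. First, $\Lip(X)$ is not separable, so one cannot simply diagonalise over a countable dense set of test tuples; this is handled by working on the compact set $K$, on which balls of $\Lip$ are relatively compact in $C(K)$ by Arzelà–Ascoli. Second, under the bare mass inequality \eqref{mass} the functionals $(\pi_{1},\ldots,\pi_{k}) \mapsto T_{n}(f,\pi_{1},\ldots,\pi_{k})$ are \emph{not} equicontinuous for the uniform norm (a uniformly small perturbation of $\pi_{j}$ need not have small Lipschitz constant), and overcoming this is precisely where the control of $\|\partial T_{n}\|$ — as opposed to only $\|T_{n}\|$ — is used.

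Write $M := \sup_{n}(\|T_{n}\|(X) + \|\partial T_{n}\|(X)) < \infty$. First I would record some reductions. Since $\spt \partial T_{n} \subseteq \spt T_{n} \subseteq K$ and the mass measures are tight and concentrated on $K$, one has $T_{n} = T_{n}\lfloor_{K}$; together with the locality of the canonical extension of $T_{n}$ to bounded Borel functions, this shows that $T_{n}(f,\pi_{1},\ldots,\pi_{k})$ is unaffected by modifying each $\pi_{j}$ away from a neighbourhood of $K$. Using multilinearity to normalise $\Lip(\pi_{j}) \leq 1$, subtracting the (by locality harmless) constant $\pi_{j}(x_{0})$ for a fixed $x_{0}\in K$, and truncating at a suitable height, it therefore suffices to prove convergence of $T_{n}(f,\pi_{1},\ldots,\pi_{k})$ for every $f \in \Lip_{b}(X)$ and every $\pi_{j}\in \Lip_{b}(X)$ with $\Lip(\pi_{j}) \leq 1$ and $\|\pi_{j}\|_{\infty} \leq R := \diam(K)+1$.

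The key estimate is as follows: for a normal $k$-current $S$ and $f,g \in \Lip_{b}(X)$, $\sigma_{2},\ldots,\sigma_{k}\in \Lip(X)$, the product rule for metric currents together with the definition of the boundary give
\begin{displaymath} S(f,g,\sigma_{2},\ldots,\sigma_{k}) = \partial S(fg,\sigma_{2},\ldots,\sigma_{k}) - S(g,f,\sigma_{2},\ldots,\sigma_{k}), \end{displaymath}
whence, since $\|S\|$ and $\|\partial S\|$ are carried by $\spt S$,
\begin{displaymath} |S(f,g,\sigma_{2},\ldots,\sigma_{k})| \leq \Big(\textstyle\prod_{j=2}^{k}\Lip(\sigma_{j})\Big)\|g\|_{L^{\infty}(\spt S)}\big(\|f\|_{\infty}\|\partial S\|(X) + \Lip(f)\|S\|(X)\big); \end{displaymath}
note that the right-hand side only sees $g$ on $\spt S \subseteq K$. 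Now fix a countable family $\mathcal{P}\subseteq \Lip_{b}(X)^{k}$ of tuples with $\Lip(\sigma_{j})\leq 1$, $\|\sigma_{j}\|_{\infty}\leq R$, whose restrictions to $K$ are dense, for the uniform norm on $K$, in the set of all such tuples (possible by Arzelà–Ascoli). For each $\sigma \in \mathcal{P}$ the map $f \mapsto T_{n}(f,\sigma)$ extends, by \eqref{mass} and density of $\Lip$ in $C(K)$, to a signed measure on $K$ of total variation $\leq M$; by Banach–Alaoglu, separability of $C(K)$, and a diagonal argument over the countable set $\mathcal{P}$, I extract a subsequence $(T_{n_{m}})$ along which $T_{n_{m}}(f,\sigma)$ converges for every $f \in \Lip_{b}(X)$ and $\sigma \in \mathcal{P}$, and (after a further extraction) $\|T_{n_{m}}\|$ and $\|\partial T_{n_{m}}\|$ converge weak-$*$ to finite measures $\nu,\nu'$ concentrated on $K$.

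Given an arbitrary admissible tuple $(f,\pi_{1},\ldots,\pi_{k})$, choose $\sigma^{(i)}\in \mathcal{P}$ with $\epsilon_{i}:=\max_{j}\|\pi_{j}-\sigma^{(i)}_{j}\|_{L^{\infty}(K)}\to 0$; telescoping $T_{n}(f,\pi)-T_{n}(f,\sigma^{(i)})$ over the $k$ slots and applying the key estimate to each term (the differences being bounded, so that the product rule applies) yields $|T_{n}(f,\pi)-T_{n}(f,\sigma^{(i)})| \lesssim k\,\epsilon_{i}\,(\|f\|_{\infty}+\Lip(f))M$, uniformly in $n$. An $\epsilon/3$ argument then shows $(T_{n_{m}}(f,\pi))_{m}$ is Cauchy; let $T(f,\pi)$ be its limit. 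As a pointwise limit, $T$ is multilinear and satisfies locality; it obeys the continuity axiom because pointwise convergence of the $\pi_{j}$ with uniformly bounded Lipschitz constants forces uniform convergence on the compact $K$, to which the key estimate applies; and passing to the limit in \eqref{mass} (using $\int |f|\,d\|T_{n_{m}}\| \to \int |f|\,d\nu$ for $f \in C(K)$) gives $\|T\| \leq \nu$. Hence $T$ is a $k$-current with $\spt T \subseteq K$, and applying the same reasoning to $\partial T(f,\pi_{1},\ldots,\pi_{k-1}) = \lim_{m}\partial T_{n_{m}}(f,\pi_{1},\ldots,\pi_{k-1})$ yields $\|\partial T\| \leq \nu'$, so $T$ is normal. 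I expect the main obstacle to be exactly this last step: one needs a modulus of continuity for $T_{n}(f,\cdot)$ in the $\pi$-variables that is simultaneously with respect to the uniform norm on $K$ (so that Arzelà–Ascoli and the density of $\mathcal{P}$ can be used) and independent of $n$, and it is precisely the uniform bound on $\|\partial T_{n}\|$ that supplies it via the displayed identity. The ancillary facts used in the reductions — tightness of the mass measures, $T_{n}=T_{n}\lfloor_{K}$, $\spt\partial T_{n}\subseteq\spt T_{n}$, locality of the canonical extension, and the verification that the limit functional satisfies all the current axioms — are standard within the Ambrosio–Kirchheim framework.
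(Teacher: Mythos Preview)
The paper does not supply a proof of this statement: it is merely quoted as a background result, with the sentence ``The original reference, and the full version of the theorem, is \cite[Theorem 5.2]{MR1794185}'' immediately preceding it. So there is no ``paper's own proof'' to compare against.

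Your sketch is, as you say yourself, a recapitulation of the Ambrosio--Kirchheim argument, and the outline is sound. The identification of the crux --- that the uniform bound on $\|\partial T_{n}\|$ is exactly what upgrades equicontinuity in $\pi_{j}$ from the Lipschitz seminorm to the sup-norm on $K$, via the Leibniz-type identity $S(f,g,\sigma) = \partial S(fg,\sigma) - S(g,f,\sigma)$ --- is correct and is indeed the heart of the proof. The reductions (localisation to $K$, normalisation of the $\pi_{j}$, countable uniform-dense family via Arzel\`a--Ascoli, diagonal extraction, weak-$*$ limits of the mass measures) are all standard and correctly invoked. One minor point: when verifying the continuity axiom for the limit $T$, you appeal to uniform convergence on $K$ plus the key estimate; strictly speaking one also needs to check that the limit satisfies the locality axiom (vanishing when some $\pi_{j}$ is constant on a neighbourhood of $\{f\neq 0\}$), but this passes to pointwise limits without difficulty.
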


\section{Generalised pencils of curves imply the $1$-Poincar\'{e} inequality}\label{twoToOne}

In this section we prove that if $X$ as in Theorem \ref{main} admits generalised pencils of curves, then it supports a weak $1$-Poincar\'{e} inequality.
It is well known, see for instance \cite[Theorem 8.1.7]{MR3363168}, that doubling metric measure spaces which support a Poincar\'{e} inequality can be characterised in terms of the validity of pointwise inequalities between functions and their upper gradients. We will use the existence of GPCs to derive such an inequality between an arbitrary Lipschitz function $u:X \to \mathbb{R}$ and its (upper) \emph{pointwise Lipschitz constant}
\begin{displaymath} x \mapsto \textup{Lip}(u,x) := \limsup_{r \to 0} \sup_{d(y,x) \leq r} \frac{|u(x) - u(y)|}{r}. \end{displaymath}
The desired inequality will be based on the following lemma.

\begin{lemma}\label{lemma1} Let X be a complete $\sigma$-compact metric space and let $u \colon X \to \R$ be a Lipschitz function. Then, for any $1$-current $T$, we have
\begin{displaymath} |\partial T(u)| \leq \int \textup{Lip}(u,x) \, d\|T\|(x). \end{displaymath}
\end{lemma}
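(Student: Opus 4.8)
The plan is to reduce the estimate to the one-dimensional case via a standard decomposition of normal $1$-currents, but since we do not want to invoke the full structure theory here, the cleanest route is to prove it by an approximation argument starting from the definition of $\|T\|$ and $\partial T$. First I would recall that $\partial T(u) = T(1,u)$, so the quantity we must bound is $|T(1,u)|$. The naive mass bound $|T(1,u)| \le \Lip(u)\,\|T\|(X)$ is too weak: it sees only the global Lipschitz constant of $u$, whereas we want the pointwise constant $\Lip(u,x)$ integrated against $\|T\|$. The idea is to localise: partition $X$ into small Borel pieces $E_1,\dots,E_N$ of diameter $< \varepsilon$, and on each piece replace the global Lipschitz constant by something close to $\sup_{x\in E_i}\Lip(u,x)$ plus an error that vanishes as $\varepsilon\to 0$.

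The key technical input is multilinearity of $T$ in its Lipschitz arguments together with the locality/restriction properties recorded in the excerpt around Definition \ref{d:restr}. Concretely, I would fix $\varepsilon>0$ and a compact exhaustion (using $\sigma$-compactness and Lemma \ref{regularityOfMass} to discard a set of small $\|T\|$-mass), cover the relevant compact set by finitely many Borel sets $E_i$ of small diameter, pick base points $x_i\in E_i$, and write, using the canonical extension of $T$ to $B(X)\times\Lip(X)$ and the additivity $T(\sum_i f\chi_{E_i},\pi)=\sum_i T(f\chi_{E_i},\pi)=\sum_i T\lfloor_{E_i}(f,\pi)$,
\begin{displaymath}
\partial T(u) = T(1,u) = \sum_{i=1}^N T\lfloor_{E_i}(1,u).
\end{displaymath}
On each $E_i$ I would like to say $|T\lfloor_{E_i}(1,u)| \lesssim (\sup_{E_i}\Lip(u,\cdot) + o(1))\,\|T\|(E_i)$. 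The difficulty is that $\Lip(u,x)$ is a pointwise infinitesimal quantity, while $T\lfloor_{E_i}$ "sees" $u$ at the finite scale $\diam(E_i)=\varepsilon$; so what is genuinely controlled on $E_i$ is $\sup_{x,y\in E_i}|u(x)-u(y)|/\varepsilon$ at best, via $|T\lfloor_{E_i}(1,u)| = |T\lfloor_{E_i}(1,u-u(x_i))| \le \mathrm{osc}_{E_i}(u)\cdot(\text{something})$ — but this "something" is $\mathrm{Lip}$-type, not a mass, because of the homogeneity in the $\pi$-slot.

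The resolution, and the step I expect to be the main obstacle, is to handle the homogeneity in the Lipschitz slot correctly: one cannot directly bound $|T(1,u-u(x_i))|$ on $E_i$ by $\mathrm{osc}_{E_i}(u)\,\|T\|(E_i)$ because replacing $u$ by $u-u(x_i)$ changes the function but not its Lipschitz constant. Instead I would exploit that the estimate is really about the derivative of $u$ along $T$, and pass through a truncation: for each $i$, let $u_i$ be $u$ truncated to oscillate by at most $\mathrm{osc}_{E_i}(u)$ and be constant outside a neighbourhood of $E_i$; then apply the mass inequality \eqref{mass}, in the sharp form that for a $1$-current, $|T(f,\pi)| \le \int |f|\, \mathrm{Lip}(\pi, x)\, d\|T\|(x)$ after a limiting/partition argument — this sharpened mass bound is precisely the content of the lemma and is what one proves by iterating the partition. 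So the honest structure is: (i) establish the refined mass inequality $|T(f,\pi)|\le \int|f|\,\mathrm{Lip}(\pi,x)\,d\|T\|(x)$ for $1$-currents by a partition-and-limit argument from \eqref{mass}, using that $\mathrm{Lip}(\pi\chi_{E_i}\text{-type restrictions})$ can be taken close to $\sup_{E_i}\mathrm{Lip}(\pi,\cdot)$ when $\diam E_i$ is small and $\pi$ is modified to be locally affine-like; (ii) then the lemma is the special case $f=1$, $\pi=u$, $\partial T(u)=T(1,u)$. The main work, and the only delicate point, is step (i): controlling the Lipschitz constant of the localised/modified test functions so the errors telescope to zero, which uses $\sigma$-compactness (for the finite subcover at each scale) and outer regularity of the finite measure $\|T\|$ from Lemma \ref{regularityOfMass}.
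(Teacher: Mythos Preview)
Your proposal correctly identifies the starting point ($\partial T(u)=T(1,u)$ and the naive bound via $\Lip(u)$), and you correctly diagnose the central difficulty: a small-diameter partition does not, by itself, give control of $|T\lfloor_{E_i}(1,u)|$ in terms of $\sup_{E_i}\Lip(u,\cdot)$, because the pointwise Lipschitz constant is infinitesimal and the scale at which $u_R(x)\approx\Lip(u,x)$ depends on $x$. However, your resolution of this difficulty is not a resolution: the ``truncation'' and ``locally affine-like modification'' you sketch do not make sense in a general metric space (there are no affine functions), and your step (i) simply restates the inequality to be proved in slightly more general form. The phrase ``$\Lip(\pi\chi_{E_i}\text{-type restrictions})$ can be taken close to $\sup_{E_i}\Lip(\pi,\cdot)$ when $\diam E_i$ is small'' is precisely the false step you yourself flagged two paragraphs earlier.

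The missing idea, which the paper supplies, is to partition not by small diameter but by \emph{level sets of the approximate pointwise Lipschitz constant}. Concretely, with $u_R(x):=\sup_{0<r\le R}\sup_{d(y,x)\le r}|u(x)-u(y)|/r$, one covers $X$ by the sets
\[
E_{\delta,j}=\{x:(1+\epsilon)^j<u_R(x)\le(1+\epsilon)^{j+1}\text{ for all }R\le\delta\}
\]
(intersected with balls of radius $\delta/2$), together with analogous sets where $\Lip(u,\cdot)$ is small. On each such piece $E_i$ the restriction $u|_{E_i}$ is \emph{genuinely} $L_i$-Lipschitz with $L_i\le(1+\epsilon)^{j+1}$, \emph{and} $\Lip(u,x)\ge(1+\epsilon)^j\ge(1-\epsilon)L_i$ for $x\in E_i$. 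One then McShane-extends $u|_{E_i}$ to an $L_i$-Lipschitz function $u_i^E$ on all of $X$, reduces (via Lemma~\ref{regularityOfMass}) to compact $E_i$ so that $\spt(T\lfloor_{E_i})\subset E_i$, and invokes the locality axiom \cite[(3.6)]{MR1794185} to get $T\lfloor_{E_i}(1,u)=T\lfloor_{E_i}(1,u_i^E)$. Now the standard mass bound applies with the \emph{global} Lipschitz constant $L_i$ of $u_i^E$, which is comparable to $\Lip(u,\cdot)$ on $E_i$. This level-set decomposition, McShane extension, and appeal to locality are the three ingredients your sketch is missing.
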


\begin{proof} The idea of the proof is the following: By definition of $\partial T$ and since $\|T\|$ is a finite Borel measure on $X$ satisfying \eqref{mass}, we know that
\begin{displaymath}
|\partial T (u)| = |T(1,u)| \leq  \int_X \mathrm{Lip}(u) d\|T\|(x),
\end{displaymath}
where $\mathrm{Lip}(u)$ is the \emph{Lipschitz constant} of $u$, that is, the smallest constant $L\in [0,\infty)$ for which $|u(x)-u(y)|\leq L d(x,y)$ holds for all $x,y\in X$.
The desired inequality in Lemma \ref{lemma1} is similar, but $\mathrm{Lip}(u)$ is replaced by the \textbf{pointwise} Lipschitz constant $\mathrm{Lip}(u,\cdot)$. To achieve this, we will essentially decompose $X$ into pieces where $\mathrm{Lip}(u,\cdot)$ is almost constant.

We now turn to the details. Write
\begin{displaymath} E := \{x \in X : \textup{Lip}(u,x) > 0\} \quad \text{and} \quad Z := \{x \in X : \textup{Lip}(u,x) = 0\}. \end{displaymath}
We perform countable decompositions of the sets $E$ and $Z$. Consider
\begin{displaymath} u_{R}(x) := \sup_{0 < r \leq R} \sup_{d(y,x) \leq r} \frac{|u(x) - u(y)|}{r}, \end{displaymath}
fix $\epsilon > 0$, and define
\begin{displaymath} E_{\delta,j} := \{x \in X : (1 + \epsilon)^{j} < u_{R} \leq (1 + \epsilon)^{j + 1} \text{ for all } R \leq \delta\}, \quad j \in \Z, \: \delta > 0 \end{displaymath}
and
\begin{displaymath} Z_{\delta,j} := \{x \in X : u_{R} \leq 2^{-j} \text{ for all } R \leq \delta\}, \quad j \in \N, \: \delta > 0. \end{displaymath}
Note that $\mathrm{Lip}(u,x)= \lim_{R\to 0} u_R(x)$ and that for every $j \in \Z$ fixed, the sequences $(E_{1/i,j})_{i\in \mathbb{N}}$ and
$(Z_{1/i,j})_{i\in \mathbb{N}}$ are nested.
Then, we can write
\begin{equation}\label{form4} X  = E \cup Z \subset \bigcup_{i \in \N} \bigcup_{j \in \Z} E_{1/i,j} \cup \bigcap_{j \in \N} \bigcup_{i \in \N} Z_{1/i,j}. \end{equation}
Fix $x \in X$, $i \in \N$. Notice that the restriction of $u$ to the set
\begin{displaymath} B(x,1/(2i)) \cap E_{1/i,j} =: E^{x}_{1/(2i),j} \end{displaymath}
is $(1 + \epsilon)^{j + 1}$-Lipschitz since, if $y,z \in E^{x}_{1/(2i),j}$, then
\begin{equation*} y,z \in E_{1/i,j} \quad \text{and} \quad d(y,z) \leq \frac{1}{i}, \end{equation*}
which implies that
\begin{equation}\label{form3} \frac{|u(y) - u(z)|}{d(y,z)} \leq u_{1/i}(y) \leq (1 + \epsilon)^{j + 1}. \end{equation}
Moreover,
\begin{equation}\label{form2} \textup{Lip}(u,y) \geq (1 + \epsilon)^{j}, \qquad y \in E_{1/(2i),j}^{x}. \end{equation}

A similar argument applies if $x \in X$, and $y,z \in B(x,1/(2i)) \cap Z_{1/i,j} =: Z_{1/(2i),j}^{x}$. Then the conclusion is that the restriction of $u$ to the set $Z_{1/(2i),j}^{x}$ is $2^{-j}$-Lipschitz, but one does not care about (and cannot have) the lower bound \eqref{form2}.

Since $(X,d)$ is $\sigma$-compact, it is separable and hence we can pick a countable dense subset $\{x_n\}_{n\in\mathbb{N}}\subseteq X$. Then for arbitrarily small $\delta \in [0,1/2)$, the sets
\begin{displaymath}
\left\{E_{1/(2i),j}^{x_n}:\; i\in \mathbb{N},j\in\mathbb{Z},n\in\mathbb{N}\right\} \cup \left\{Z_{1/(2i),j}^{x_n}:\;i\in\mathbb{N}, j\in \mathbb{N}\text{ with }j\geq - \log_2 \delta,n\in\mathbb{N}\right\}
\end{displaymath}
constitute a countable cover of $X$ by Borel sets. Using this cover, we can easily construct a countable \textbf{disjoint} cover of $X$ by Borel sets $\{E_i\}_{i\in \mathbb{N}}$ and $\{Z_i\}_{i\in\mathbb{N}}$ such that the function $u$ can be decomposed as
\begin{equation}\label{form5} u = u\chi_{E} + u\chi_{Z} = \sum_{i \in \N} u\chi_{E_{i}} + \sum_{i \in \N} u\chi_{Z_{i}} \end{equation}
where
\begin{equation}\label{eq:e_i} u|_{E_{i}} \text{ is } L_{i}\text{-Lipschitz} \quad \text{and} \quad \textup{Lip}(u,x) \geq (1 - \epsilon)L_{i} \text{ for } x \in E_{i} \end{equation}
for some finite constants $L_{i} > 0$, and
\begin{displaymath} u|_{Z_{i}} \text{ is } \delta\text{-Lipschitz}, \end{displaymath}
where $\delta > 0$ can be taken arbitrarily small.
Here we have used that the properties
\eqref{form3} and \eqref{form2} (and their counterparts for $Z_{1/(2i),j}$) are preserved under taking subsets.

 Moreover, Lemma \ref{regularityOfMass} allows us to remove for every $i\in \mathbb{N}$ a Borel set $N_i$ from $E_i$ (or similarly $Z_{i}$) such that
 \begin{displaymath}
 E_i \setminus N_i\text{ is compact  }\quad\text{and}\quad \|T\|(N_i) < \frac{\epsilon}{2^{i+1}},
 \end{displaymath}
 Thus, we may assume that the sets $\{E_i\}_{i\in\mathbb{N}}\cup \{Z_i\}_{i\in\mathbb{N}}$ are compact, if we replace \eqref{form5} by a decomposition
\begin{displaymath} u = \sum_{i \in \N} u\chi_{E_{i}} + \sum_{i \in \N} u\chi_{Z_{i}} + u\chi_{N}, \end{displaymath}
where $N \subset X$ is a Borel set with $\|T\|(N) < \epsilon$.

Next, we use the McShane extension theorem to find Lipschitz functions $u^{E}_{i},u^{Z}_{i} \colon X \to \R$ such that
\begin{displaymath} u^{E}_{i}|_{E_{i}} = u|_{E_{i}} \quad \text{and} \quad u^{Z}_{i}|_{Z_{i}} = u|_{Z_{i}} \end{displaymath}
and
\begin{displaymath} u_{i}^{E} \text{ is } L_{i}\text{-Lipschitz} \quad \text{and} \quad u_{i}^{Z} \text{ is } \delta\text{-Lipschitz}. \end{displaymath}
Then
\begin{displaymath} u = \sum_{i \in \N} u_{i}^{E}\chi_{E_{i}} + \sum_{i \in \N} u_{i}^{Z}\chi_{Z_{i}} + u\chi_{N}, \end{displaymath}
and we can write
\begin{equation}\label{eq:decomp} |\partial T(u)| = |T(1,u)| \leq \sum_{i \in \N} |T(\chi_{E_{i}},u)| + \sum_{i \in \N} |T(\chi_{Z_{i}},u)| + |T(\chi_{N},u)| \end{equation}
Since $\textup{Lip}(u) < \infty$, we have $|T(\chi_{N},u)| \leq \textup{Lip}(u)\|T\|(N) < \mathrm{Lip}(u) \epsilon$.
We now estimate the terms involving $\chi_{E_i}$.
By definition of the restriction operation, see Definition \ref{d:restr} and the comment below it, we can write
\begin{equation}\label{eq:1_sum}
\left|T(\chi_{E_{i}},u)\right| =  \left|T\lfloor_{E_{i}}(1,u)\right|.
\end{equation}
Recall that $u|_{E_i}= u_i^E$. This is useful information since, according to \cite[(3.6)]{MR1794185}, the values of a $1$-current agree on $(f,\pi_1)$ and $(f',\pi_1')$ whenever
$f=f'$ and $\pi_1= \pi_1'$ on the support of $T$. {Using that $\mathrm{spt}\left( T\lfloor_{E_i}\right)\subseteq E_i$}, we apply this fact to the current $T\lfloor_{E_{i}}$ and the pairs $(f,\pi_1)=(1,u)$ and $(f',\pi_1')=(1,u_i^E)$, $i\in \mathbb{N}$. This shows that
\begin{equation}\label{eq:2_sum}
\left|T\lfloor_{E_{i}}(1,u)\right| =  \left|T\lfloor_{E_{i}}(1,u_i^E)\right|
\end{equation}
Finally,  by \eqref{eq:restr_meas} and the property \eqref{eq:e_i} of $u_i^E$, it holds for every $i\in \mathbb{N}$ that
\begin{equation}\label{eq:3_sum}
\left|T\lfloor_{E_{i}}(1,u_i^E)\right| \leq \int_{E_i} L_i d \|T \| \leq \frac{1}{1-\epsilon} \int_{E_i} \mathrm{Lip}(u,x) d\|T\|(x).
\end{equation}
 Combining \eqref{eq:1_sum}, \eqref{eq:2_sum}, and \eqref{eq:3_sum}, and using the pairwise disjointedness of the sets $E_i$, $i\in \mathbb{N}$, we conclude that
\begin{displaymath}
\sum_{i \in \mathbb{N}} \left|T(\chi_{E_{i}},u)\right| \leq \frac{1}{1-\epsilon} \int_{X} \mathrm{Lip}(u,x) d\|T\|(x)
\end{displaymath}
%
Similar considerations give
\begin{displaymath} \sum_{i \in \N} |T(\chi_{Z_{i}},u)| \leq \delta\|T\|(Z). \end{displaymath}

Letting $\epsilon \to 0$ and $\delta \to 0$ in \eqref{eq:decomp} completes the proof of  Lemma \ref{lemma1}.  \end{proof}

We next apply Lemma \ref{lemma1} to deduce the validity of a weak $1$-Poincar\'{e} inequality from the existence of GPCs.

\begin{proof}[Proof of ``if'' implication in Theorem \ref{main}] By a result of Keith \cite{MR2013501}, see also Theorem 8.4.2 in \cite{MR3363168}, it suffices to verify the Poincar\'e inequality for \emph{a priori} Lipschitz continuous functions $u$ and for the pointwise Lipschitz constant $\rho = \mathrm{Lip}(u,\cdot)$ instead of arbitrary upper gradients. So, let $u \colon X \to \R$ with $\textup{Lip}(u) < \infty$.

Recalling that
\begin{displaymath}
\Theta(x,r)= \frac{\mu(B(x,r))}{r},\quad x\in X,\;r>0,
\end{displaymath}
we will first check that
\begin{equation}\label{form1} |u(t) - u(s)| \lesssim \int_{B(s,C_{0}d(s,t))} \frac{\textup{Lip}(u,y)}{\Theta(s,d(s,y))} + \frac{\textup{Lip}(u,y)}{\Theta(t,d(t,y))} \, d\mu(y)\end{equation}
for distinct points $s,t \in X$. Start by fixing such points $s,t$, let $T$ be a GPC joining $s$ to $t$, and recall that $\spt T \subset B(s,C_{0}d(s,t))$. Then,
\begin{align*} |u(t) - u(s)| = |\partial T(u)|&  \leq \int \textup{Lip}(u,y) \, d\|T\|(y)\\
& \leq C_0 \int_{B(s,C_{0}d(s,t))} \frac{\textup{Lip}(u,y)}{\Theta(s,d(s,y))} + \frac{\textup{Lip}(u,y)}{\Theta(t,d(t,y))} \, d\mu(y), \end{align*}
using Lemma \ref{lemma1} and the property (P3) of the GPC $T$. (Since a complete doubling metric space is proper, see \cite[Lemma 4.1.14]{MR3363168}, and a proper metric space is $\sigma$-compact, the space $(X,d)$ satisfies the assumptions of Lemma  \ref{lemma1}). This proves \eqref{form1}, which is (almost) a well-known sufficient condition for the weak $1$-Poincar\'{e} inequality. The only technicality here is that we only know \eqref{form1} for the particular upper gradient $\Lip(u,\cdot)$. To complete the proof, we will now briefly argue that that this suffices to imply the weak $1$-Poincar\'e inequality in full generality.

Indeed, \cite[Theorem 9.5]{MR1800917} lists several  conditions that imply  weak Poincar\'e inequalities in doubling spaces (see also the references in \cite{MR1800917}). Our estimate \eqref{form1} shows that condition (2) in \cite[Theorem 9.5]{MR1800917} holds for $p=1$, $\mu$,  $u$ Lipschitz, the particular upper gradient $\rho = \mathrm{Lip}(u,\cdot)$, $C_2=C_3 = C_0$. It then follows from the proof in \cite{MR1800917} that also condition (3) in the theorem holds for the same pair $(u,\rho)$ (by this, we mean that to obtain condition (3) for $u$ and $\rho$, one only needs to have condition (2) for $u$ and $\rho$, and no other upper gradients). We rephrase condition (3) in a slightly peculiar manner for future application: there exists a constant $C\geq 1$ (again depending on $C_{0}$) such that if
$2B:= B(z,2r)$ is any ball in $X$, and $x,y\in 2B$, then
then
\begin{equation}\label{eq:(3)} |u(x) - u(y)| \lesssim d(x,y) \left( M_{Cd(x,y)}\rho(x) + M_{Cd(x,y)}\rho(y) \right). \end{equation}
Here $M_{R}$ is the restricted maximal function
\begin{displaymath} M_{R}\rho(x) = \sup_{r < R} \frac{1}{\mu(B(x,r))} \int_{B(x,r)} \rho(y) \, d\mu (y), \qquad R > 0. \end{displaymath}
Next we need to verify that inequality \eqref{eq:(3)} implies that the very same pair $(u,\rho)$ satisfies the weak $1$-Poincar\'{e} inequality.
To this end, we apply Theorem 8.1.18 in \cite{MR3363168} (originally due to Haj{\l}asz \cite{MR1401074}) with $h = M_{4Cr}\rho$ and $Q= \log_2 C_{\mu}$ for the doubling constant $C_{\mu}$ of $\mu$, to deduce that
\begin{displaymath} \frac{1}{\mu(B)} \int_{B} |u - u_{B}| \, d\mu^Q \lesssim r \left( \frac{1}{\mu(2B)} \int_{2B} [M_{4Cr}\rho]^{\tfrac{Q}{Q + 1}} \, d\mu \right)^{\tfrac{Q + 1}{Q}}. \end{displaymath}
Finally, following verbatim the argument on p. 224 of \cite{MR3363168},
we conclude that
\begin{align*} \frac{1}{\mu(B)} \int_{B} |u - u_{B}| \, d\mu &\lesssim  r \left( \frac{1}{\mu(2B)} \int_{2B} [M_{4Cr}\rho]^{\tfrac{Q}{Q + 1}} \, d\mu \right)^{\tfrac{Q + 1}{Q}}\\
&\lesssim  r \left(\fint_{2CB} [M_{4Cr}\rho]^{\tfrac{Q}{Q + 1}} \, d\mu \right)^{\tfrac{Q + 1}{Q}}\\
&\lesssim \fint_{6CB} \rho \;\mathrm{d}\mu. \end{align*}
 Hence the inequality \eqref{PPoincare} holds with  $p=1$ and $\lambda = 6C$  (depending on $C_0$) for all open balls $B\subset X$ and all pairs $(u,\rho)$, where $u:X \to \mathbb{R}$ is Lipschitz and $\rho=\mathrm{Lip}(u,\cdot)$. According to \cite[Theorem 8.4.2]{MR3363168}, this shows that $(X,d,\mu)$ supports the weak $1$-Poincar\'e inequality.
\end{proof}

\section{From $1$-Poincar\'{e} to generalised pencils of curves}\label{oneToTwo}

\subsection{An initial reduction} The main effort in the rest of the paper consists of proving the following statement:

\begin{thm}\label{main2} Let $(X,d,\mu)$ be a complete doubling \textbf{and geodesic} metric measure space. If $X$ supports a $1$-Poincar\'{e} inequality, then it supports generalised pencils of curves.
 \end{thm}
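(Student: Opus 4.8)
The strategy is to build the normal $1$-current $T$ joining $s$ and $t$ as a weak limit of currents supported on polygonal paths in a discrete approximation of $X$, where the "mass" of the limiting current is controlled by a network flow whose existence is guaranteed by the max-flow--min-cut theorem. Concretely, I would fix $s \ne t$ and, for each small $\delta>0$, choose a maximal $\delta$-separated net $\calN_\delta \subset B(s, C_0 d(s,t))$ (with $C_0$ chosen comfortably large, using the geodesic assumption so that $s$ and $t$ can actually be joined inside such a ball). Form a graph $G_\delta$ whose vertices are the net points, with an edge between $x,y$ whenever $d(x,y) \lesssim \delta$; give each vertex $x$ a capacity proportional to $\mu(B(x,\delta))\bigl[\Theta(s,d(s,x))^{-1} + \Theta(t,d(t,x))^{-1}\bigr]$, i.e. exactly the weight we want to see in (P3), integrated over the net cell. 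I then seek a unit flow in $G_\delta$ from (the net point nearest) $s$ to (the net point nearest) $t$ respecting these vertex capacities. To each such flow one associates a discrete $1$-current: a weighted sum of the currents $\gamma_\sharp \llbracket 0,\ell\rrbracket$ over the (geodesic, by the geodesic hypothesis) segments realising the edges, with weights equal to the flow values. Its boundary is $\delta_{t_\delta} - \delta_{s_\delta}$ up to points at distance $\lesssim \delta$ from $s,t$, its support lies in $B(s,C_0 d(s,t))$, and its mass measure is, by construction, $\lesssim$ the vertex-capacity measure, which in turn is comparable to the measure $w\,d\mu$ appearing in (P3).

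The two technical pillars are: (i) showing that the flow exists with the right capacities, and (ii) passing to the limit $\delta \to 0$. For (i), by max-flow--min-cut it suffices to show every vertex cut separating $s_\delta$ from $t_\delta$ has total capacity $\gtrsim 1$. Here is where the $1$-Poincar\'e inequality enters: a small cut $S$ of net points would, after fattening to a set $U = \bigcup_{x \in S} B(x, C\delta)$, disconnect $B(s,C_0 d(s,t))\setminus U$ into an "$s$-part" and a "$t$-part"; one tests the Poincar\'e inequality (in its pointwise/Lipschitz form, or via the known equivalence with a "MECp"-type or thick-curve-family condition) against the function $u$ that is $0$ near $s$ and $1$ near $t$ and transitions across $U$, with upper gradient supported on $U$ and of size $\sim 1/(C\delta)$ there. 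The Poincar\'e inequality forces $\int_{\lambda B} \rho \gtrsim 1$, i.e. $\sum_{x \in S} \mu(B(x,C\delta))/\delta \gtrsim \mu(B(s,d(s,t)))/d(s,t)$ after localisation to annuli; summing the estimate dyadically over annuli centred at $s$ and $t$ and weighting correctly recovers exactly $\sum_{x\in S} \text{capacity}(x) \gtrsim 1$. This annulus-by-annulus bookkeeping — making sure the density weights $\Theta(s,d(s,x))^{-1}$ are the right normalisation so that a cut in each annulus costs a definite fraction — is the crux, and it is where the doubling property is used repeatedly to compare $\mu(B(x,C\delta))$ with $\mu(B(s,d(s,x)))$ and to control overlaps.

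For (ii), the discrete currents $T_\delta$ have mass and boundary mass bounded uniformly in $\delta$ (mass $\lesssim d(s,t)$ by the curve-length bound $\calH^1(\gamma) \le C_0 d(s,t)$ combined with the flow being a probability-like flow of total value $1$; boundary mass $\le 2$), and all are supported in the fixed compact ball $\overline{B}(s,C_0 d(s,t))$ (compact since a complete doubling space is proper). The compactness theorem for normal currents, Theorem~\ref{compactness}, then yields a subsequential limit $T$, a normal $1$-current with $\spt T \subset B(s, C_0 d(s,t))$ (enlarging $C_0$ slightly to absorb the closure), giving (P2). For (P1), $\partial T_\delta \to \delta_t - \delta_s$ since the "wrong" boundary points sit within $\delta$ of $s$ or $t$; weak convergence of boundaries follows from commuting $\partial$ with the limit. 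For (P3), lower semicontinuity of mass under weak convergence gives $\|T\| \le \lim \|T_\delta\|$ as measures on open sets, and since each $\|T_\delta\| \lesssim \nu_\delta$ with $\nu_\delta$ converging weakly (again by doubling, after a routine diagonalisation) to a measure absolutely continuous with respect to $\mu$ with the desired density, we get $\|T\| = w\,d\mu$ with the bound in (P3).

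**Main obstacle.** I expect the genuinely hard step to be (i): extracting from the $1$-Poincar\'e inequality a quantitative lower bound on vertex cuts in $G_\delta$ with the precise density-weighted capacities, uniformly in $\delta$. Turning a combinatorial cut into an admissible competitor $(u,\rho)$ for the Poincar\'e inequality, and then accounting — via a dyadic decomposition into annuli around $s$ and around $t$, with doubling used to pass between scales — for exactly the weights $\Theta(s,\cdot)^{-1}, \Theta(t,\cdot)^{-1}$ so that the total capacity of any cut is $\gtrsim 1$, is the delicate part; one must also handle the annuli close to $s$ and $t$ (where a cut is "cheap" per point but there are few points) and far from both (where the two density terms compete) on an equal footing. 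Everything else — the graph construction, applying max-flow--min-cut, building the polygonal currents using geodesics, and the compactness/limiting argument — is comparatively routine given the background in Section~\ref{currentBackground}.
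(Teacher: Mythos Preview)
Your proposal is correct and follows essentially the same route as the paper: $\delta$-net in $B(s,C_0 d(s,t))$, a bounded-degree graph with capacities comparable to the density weights in (P3), max-flow--min-cut, a lower bound on the min-cut obtained by testing the Poincar\'e inequality against a partition-of-unity Lipschitz function that equals $1$ on the $s$-side and $0$ on the $t$-side of the cut, polygonal $1$-currents on geodesic edges weighted by the flow, and the compactness theorem to pass to the limit. The only streamlining in the paper that you might not anticipate is that the ``dyadic-annulus chaining'' you flag as the main obstacle is already packaged in the literature: the paper invokes directly the implication $(4)\Rightarrow(2)$ of \cite[Theorem~9.5]{MR1800917}, which says that the $1$-Poincar\'e inequality on a geodesic space yields the pointwise estimate $|u(s)-u(t)|\lesssim \int_{B_0}\bigl[\Theta(s,d(s,y))^{-1}+\Theta(t,d(t,y))^{-1}\bigr]\Lip(u,y)\,d\mu(y)$, so the weighted min-cut bound $\gtrsim 1$ drops out in one line (and the correct edge/vertex capacity turns out to be $\Theta(x,\delta)\bigl[\Theta(s,d(s,x))^{-1}+\Theta(t,d(t,x))^{-1}\bigr]$, i.e.\ your proposed capacity divided by $\delta$).
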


The assumptions in Theorem \ref{main2} are superficially stronger than in the remaining implication of Theorem \ref{main}, so we start by briefly discussing how Theorem \ref{main} reduces to the special case in Theorem \ref{main2}.

\begin{proof}[Proof of the ``only if'' part of Theorem \ref{main}, assuming Theorem \ref{main2}] By \cite[Corollary 8.3.16]{MR3363168}, if the complete doubling space $(X,d,\mu)$ supports a weak $1$-Poincar\'e inequality, then $d$ is biLipschitz equivalent to a geodesic metric $g$. Further, by \cite[Lemma 8.3.18]{MR3363168}, the space $(X,g,\mu)$ still supports a weak $1$-Poincar\'e inequality.
In fact, since $(X,g)$ is geodesic,
it  follows from \cite[Remark 9.1.19]{MR3363168} that $(X,g,\mu)$ even satisfies the weak $1$-Poincar\'{e} inequality \eqref{PPoincare} with constant $\lambda =1$; this is often called the $1$-Poincar\'e inequality (without the attribute "weak").

We can thus apply Theorem \ref{main2}  to $(X,g,\mu)$ in order to find a GPC between any pair of distinct points $s,t \in X$. Then $T$ is also a GPC joining $s$ to $t$ in $(X,d,\mu)$, since the conditions (P1)-(P3) in Definition \ref{GPC} are obviously invariant under bi-Lipschitz changes of metric. The least obvious is (P3), where one needs to recall that $\mu$ is a doubling measure, whence $\Theta_{d}(s,d(s,y)) \sim \Theta_{g}(s,g(s,y))$ and $\Theta_{d}(t,d(t,y)) \sim \Theta_{g}(t,g(t,y))$ for all $y \in X$. \end{proof}

As in the assumptions of Theorem \ref{main2}, we now suppose that $(X,d,\mu)$ is a complete geodesic doubling metric measure space supporting the Poincar\'e  inequality  \eqref{PPoincare} with $p=1$ and  $\lambda =1$.

\subsection{Proof of Theorem \ref{main2}}

Fix two points $s,t \in X$, and write $B_{0} := B(s,C_{0}d(s,t))$ for the ball inside (the closure of) which we should find the current $T$ as in Definition \ref{GPC}; the constant $C_{0} \geq 1$ will be specified later, and its size only depends on the data of $(X,d,\mu)$, such as the doubling constant of $\mu$, and the constant in the Poincar\'e inequality. We find the current $T$ by initially constructing a sequence of approximating currents, each of them a sum of finitely many currents of the form discussed in Example \ref{intervalExample}. We start by defining a sequence of covers of $X$ by balls. For $n \in N$, write $r_{n} := 2^{-n}$, and let $X_{n} \subset B_{0}$ be an $r_{n}$-net, that is, some maximal family of points $X_{n} \subset B_{0}$ satisfying $d(x,x') \geq r_{n}$ for all distinct $x,x' \in X_{n}$. We assume that $r_{n}$ is far smaller than $d(s,t)$, and we require that
\begin{equation}\label{form7} \{s,t\} \subset X_{n}. \end{equation}
We note that the collection of {open} balls $\calB_{n} := \{B(x,2r_{n}) : x \in X_{n}\}$ is now a cover of $B_{0}$. In fact, already the balls $B(x,r_{n})$ would be a cover of $B_{0}$: by the maximality of $X_{n}$, for every $y \in B_{0}$ there exists $x \in X_{n}$ such that \begin{equation}\label{coverProperty} y \in B(x,r_{n}). \end{equation}
Moreover, every ball $B \in \calB_{n}$ only has boundedly many ``neighbours'':
\begin{equation}\label{neighbours} \card \{B' \in \calB_{n} : B \cap B' \neq \emptyset\} \lesssim 1. \end{equation}
This follows by using the doubling property  of $\mu$ and the consequential relative lower volume decay (see \cite[Lemma 8.1.13]{MR3363168}), and noting that the balls $B(x,r_{n}/2)$, $x \in X_{n}$, are disjoint.

We will now construct a current $T_{n}$ supported in $\Omega_n = B(s, C_{0}d(s,t) + C r_n)$ for suitable constants $C_{0},C \geq 1$ (depending on the constants in the $1$-Poincar\'e inequality). The current $T_{n}$ will be constructed using the \emph{max flow min cut} theorem from graph theory, and a subsequence of the currents $T_{n}$ will eventually be shown, using the compactness theorem for normal currents, to converge to the desired current $T$ supported on $\bar{B}_{0}$.

\subsection{Graphs and flows}

To apply the max flow min cut theorem, we need to define a graph $G_{n} = (\mathcal{V}_{n},E_{n})$ associated to our problem. We set $\mathcal{V}_{n} := X_{n}$, and
\begin{displaymath} E_{n} := \{(x,x') \in \mathcal{V}_{n} \times \mathcal{V}_{n} : x \neq x' \text{ and } B(x,2r_{n}) \cap B(x',2r_{n}) \neq \emptyset\}. \end{displaymath}
Note that $(x,x') \in E_{n}$ if and only if $(x',x) \in E_{n}$, and that the maximum degree of any vertex is uniformly bounded by \eqref{neighbours}. We also define a \emph{capacity function} $c_{n} \colon E_{n} \to \Q_{+}$ satisfying
\begin{equation}\label{form9} c_{n}(x,x') \sim \frac{\Theta(x,r_{n})}{\Theta(s,d(s,x))} + \frac{ \Theta(x',r_{n})}{\Theta(t,d(t,x'))}, \quad (x,x') \in E_{n}, \end{equation}
if $\{x,x'\} \cap \{s,t\} = \emptyset$, and $c_{n}(x,x') \sim 1$ otherwise. Here $\Theta(x,r)$ is the ($1$-dimensional) density
\begin{displaymath} \Theta(x,r) = \frac{\mu(B(x,r))}{r}, \qquad x \in X, \: r > 0. \end{displaymath}
We do not specify the values of $c_n(x,x')$ more precisely: we will only use that $c_{n}(x,y)$ is a rational number within a constant multiple of the right hand side of \eqref{form9}. Note that $c_n(x,x') \sim c_n(x',x)$ for all $(x,x') \in E_{n}$; in fact, we may as well define $c_{n}(x,x') = c_{n}(x',x)$.

A \emph{flow in $G_{n}$} is a function $f \colon E_{n} \to \R$ satisfying the following three conditions:
\begin{itemize}
\item[(F1)] $f(x,x') = -f(x',x)$ for all $(x,x') \in E_{n}$,
\item[(F2)] $f(\{x\},\mathcal{V}_{n}) = 0$ for all $x \in \mathcal{V}_{n} \setminus \{s,t\}$,
\item[(F3)] $f(x,x') \leq c_{n}(x,x')$ for all $(x,x') \in E_{n}$.
\end{itemize}
Here, and in the sequel, we write
\begin{displaymath} f(\mathcal{U},\mathcal{W}) = \sum_{e \in E_{n}(\mathcal{U},\mathcal{W})} f(e), \end{displaymath}
where $E_{n}(\mathcal{U},\mathcal{W}) = \{(x,x') \in E_{n} : x \in \mathcal{U} \text{ and } x' \in \mathcal{W}\}$, and analogously
\begin{displaymath} c_n(\mathcal{U},\mathcal{W}) = \sum_{e \in E_{n}(\mathcal{U},\mathcal{W})} c_n(e).\end{displaymath}
 The \emph{norm} of a flow $f \colon E_{n} \to \R$ is defined to be the quantity
\begin{displaymath} \|f\| := f(\{s\},\calV_{n}). \end{displaymath}
A \emph{cut} is any pair $(\calS,\calS^{c})$, where $\calS \subset \mathcal{V}_{n}$ is a set with $s \in \mathcal{S}$ and $t \in \mathcal{S}^{c}$. As usual, $\calS^{c}$ denotes the complement of $\calS$ (in $\calV_{n}$). The ``total flow'' of $f$ over any cut $(\calS,\calS^{c})$ equals $\|f\|$:
\begin{equation}\label{normDef} \|f\| = f(\calS,\calS^{c}). \end{equation}
In particular, $\|f\| = f(\calV_{n}\setminus \{t\},\{t\})$. For a proof of \eqref{normDef}, see \cite[Proposition 6.2.1]{MR2729968}.  Consequently, by (F3),
\begin{equation}\label{form8} \|f\| \leq \min_{(\calS,\calS^{c})} c_{n}(\calS,\calS^{c}), \end{equation}
where the $\min$ runs over all cuts $(\calS,\calS^{c})$. In other words, the norm of any flow is bounded from above by the capacity of any cut in the graph.

A well-known theorem in graph theory due to Ford and Fulkerson \cite{MR2729968} states that if $c_{n}$ is integer-valued, then \eqref{form8} is sharp: there exists a flow $f$ with $\|f\| = \min c_{n}(\calS,\calS^{c})$. We learned the theorem from Diestel's graph theory book, see \cite[Theorem 6.2.2]{MR3644391}. Our capacity $c_{n}$ is not integer valued, but since $c_{n}(x,x') \in \Q_{+}$, and the cardinality of $E_{n}$ is finite, we may assume that $c_{n}(x,x') \in \N$ by initially multiplying all quantities by a suitable integer.

The reader should view flows in $G_{n}$ as discrete models for the current $T_{n}$: we will make the connection rigorous in Section \ref{currentSection}. For now, we wish to find a uniform lower bound for the numbers $c_{n}(\calS,\calS^{c})$, where $(\calS,\calS^{c})$ is an arbitrary cut. We claim that
\begin{equation}\label{form10} c_{n}(\calS,\calS^{c}) \gtrsim 1. \end{equation}
To this end, fix a cut $(\calS,\calS^{c})$, and recall that $s \in \calS$ and $t \in \calS^{c}$ by definition. Also by definition,
\begin{align} c_{n}(\calS,\calS^{c}) & = \sum_{(x,x') \in E_{n}(\calS,\calS^{c})} c_{n}(x,x') \notag\\
&\label{form21} \sim \sum_{(x,x') \in E_{n}(\calS,\calS^{c})} \frac{\Theta(x,r_{n})}{\Theta(s,d(s,x))} + \frac{ \Theta(x',r_{n})}{\Theta(t,d(t,x'))}. \end{align}
To be precise, \eqref{form21} only holds if none of the edges in $E_{n}(\calS,\calS^{c})$ start or end in $\{s,t\}$. We may assume this, since, for example, if $(s,x') \in E_{n}(\calS,\calS^{c})$, then $c_{n}(\calS,\calS^{c}) \geq c_{n}(s,x') \gtrsim 1$, and \eqref{form10} follows. In fact, the same argument holds a little more generally: if $(x,x') \in E_{n}(\calS,\calS^{c})$ satisfies $\dist(x,\{s,t\}) \lesssim r_{n}$ or $\dist(x',\{s,t\}) \lesssim r_{n}$, then again $c_{n}(\calS,\calS^{c}) \gtrsim 1$, using the assumption that $\mu$ is doubling. So, without loss of generality, we assume that
\begin{equation}\label{form18} \min\{\dist(x,\{s,t\}),\dist(x',\{s,t\})\} \geq Cr_{n}, \qquad (x,x') \in E_{n}(\mathcal{S},\mathcal{S}^{c}). \end{equation}
where $C \geq 1$ is a suitable large constant to be specified later. If $C \geq 20$, say, then \eqref{form18} has the following consequence:
\begin{equation}\label{form15} d(s,y) \sim d(s,x) \sim d(s,x') \quad \text{and} \quad d(t,y) \sim d(t,x') \sim d(t,x) \end{equation}
for all $y \in \overline{B(x,5r_{n})} \cup \overline{B(x',5r_{n})}$, and all pairs $(x,x') \in E_{n}(\calS,\calS^{c})$, since $d(x,x') \leq 4r_{n}$ for such pairs. Note that \eqref{form15}, combined with the doubling of $\mu$, also allows us to replace the denominators in \eqref{form18} by some comparable quantities, as indicated by \eqref{form15}, for example
\begin{equation}\label{form32} \Theta(t,d(t,x')) \sim \Theta(t,d(t,x)), \qquad (x,x') \in E_{n}(\calS,\calS^{c}). \end{equation}

Evidently, the proof of \eqref{form10} should somehow use our only assumption: the Poincar\'e  inequality \eqref{PPoincare} with $p=1$. To this end, we define a Lipschitz function $u = u_{n} \colon B_{0} \to \R$ associated to the cut $(\calS,\calS^{c})$, using a Lipschitz partition of unity on $B_{0}$, subordinate to the cover $\calB_{n}$. For $x \in \calV_{n}$, let
\begin{displaymath} \phi_{x}(y) = \chi_{B(x,2r_{n})}(y)\cdot \frac{2r_{n} - d(y,x)}{2r_{n}} \quad \text{and} \quad \psi_{x} := \frac{\phi_{x}}{\sum_{x' \in \calV_{n}} \phi_{x'}}. \end{displaymath}
Then
\begin{equation}\label{form22} \{y \in B_{0} : \psi_{x}(y) > 0\} = B_{0} \cap B(x,2r_{n}), \end{equation}
and $\psi_{x}$ and is $(C/r_{n})$-Lipschitz on $B_{0}$: this is easy to check, noting that
\begin{displaymath} \sum_{x' \in \calV_{n}} \phi_{x'}(y) \sim 1, \qquad y \in B_{0}, \end{displaymath}
by \eqref{coverProperty} and the bounded overlap of the balls in $\calB_{n}$. Evidently,
\begin{displaymath} \sum_{x \in \calV_{n}} \psi_{x}(y) = 1, \qquad y \in B_{0}, \end{displaymath}
so the family $\{\psi_{x}\}_{x \in \calV_{n}}$ is the partition of unity on $B_{0}$ we were after.
We set
\begin{displaymath} u := \sum_{x \in \calS} \psi_{x}. \end{displaymath}
Evidently, $u$ takes values in $[0,1]$, and is $L_{n}$-Lipschitz on $B_{0}$ for some $L_{n} \sim 1/r_{n}$. For $y \in X$ and any subset $\calU \subset \calV_{n}$, write
\begin{displaymath} \calU(y) := \{x \in \calU : y \in B(x,2r_{n})\} = \{x \in \calU : \phi_{x}(y) > 0\} \end{displaymath}
Clearly $\calS(y),\calS^{c}(y) \subset \calV_{n}(y)$ and $\calV_{n}(y) = \calS(y) \cup \calS^{c}(y)$ for all $y \in X$.
\begin{lemma}\label{lemma2} For $y \in B_{0}$, we have
\begin{displaymath} u(y) = 1 \quad \Longleftrightarrow \quad \calV_{n}(y) = \calS(y), \end{displaymath}
and
\begin{displaymath} u(y) = 0 \quad \Longleftrightarrow \quad \calV_{n}(y) = \calS^{c}(y). \end{displaymath}
\end{lemma}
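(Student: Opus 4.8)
\textbf{Proof plan for Lemma \ref{lemma2}.} The plan is to unwind the definitions of $u$, $\calS(y)$, and $\calS^{c}(y)$, and exploit the fact that $\{\psi_{x}\}_{x \in \calV_{n}}$ is a partition of unity on $B_{0}$ with $\psi_{x}(y) > 0$ precisely when $x \in \calV_{n}(y)$. First I would record the elementary identity
\begin{displaymath} u(y) = \sum_{x \in \calS} \psi_{x}(y) = \sum_{x \in \calS(y)} \psi_{x}(y), \end{displaymath}
where the second equality holds because $\psi_{x}(y) = 0$ whenever $x \notin \calV_{n}(y) \supset \calS(y)$; similarly $\sum_{x \in \calS^{c}(y)} \psi_{x}(y) = \sum_{x \in \calS^{c}} \psi_{x}(y) = 1 - u(y)$, using $\sum_{x \in \calV_{n}} \psi_{x}(y) = 1$ and $\calV_{n} = \calS \sqcup \calS^{c}$.

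For the first equivalence: if $\calV_{n}(y) = \calS(y)$, then $\calS^{c}(y) = \emptyset$ (since $\calS(y)$ and $\calS^{c}(y)$ are disjoint and cover $\calV_{n}(y)$), so $1 - u(y) = \sum_{x \in \calS^{c}(y)} \psi_{x}(y) = 0$, giving $u(y) = 1$. Conversely, if $u(y) = 1$, then $\sum_{x \in \calS^{c}(y)} \psi_{x}(y) = 0$; since each term is nonnegative, $\psi_{x}(y) = 0$ for every $x \in \calS^{c}(y)$, which by \eqref{form22} forces $\calS^{c}(y) = \emptyset$, hence $\calV_{n}(y) = \calS(y)$. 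The second equivalence is proved by the same argument with the roles of $\calS$ and $\calS^{c}$ interchanged: $u(y) = 0$ iff $\sum_{x \in \calS(y)} \psi_{x}(y) = 0$ iff $\calS(y) = \emptyset$ iff $\calV_{n}(y) = \calS^{c}(y)$.

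There is no real obstacle here; the only point requiring a moment of care is the passage from "a sum of nonnegative terms vanishes" to "each index set is empty", which uses precisely the nonnegativity of the $\psi_{x}$ together with the characterization \eqref{form22} of their supports (equivalently, that $\phi_{x}(y) > 0$ exactly when $x \in \calV_{n}(y)$). I would also remark, since it is used implicitly, that $y \in B_{0}$ guarantees $\calV_{n}(y) \neq \emptyset$ by the covering property \eqref{coverProperty}, so the two equivalences are genuinely exclusive alternatives when neither $\calS(y)$ nor $\calS^{c}(y)$ is all of $\calV_{n}(y)$.
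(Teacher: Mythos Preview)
Your proof is correct and follows essentially the same approach as the paper: both arguments rest on the partition-of-unity identity $\sum_{x \in \calV_{n}} \psi_{x}(y) = 1$ together with the support characterisation \eqref{form22}, and deduce that the complementary sum must vanish term by term. The only cosmetic difference is that the paper rewrites $u(y) = 1$ as an equality of numerator and denominator in the $\phi_{x}$-representation, whereas you work directly with $1 - u(y) = \sum_{x \in \calS^{c}(y)} \psi_{x}(y)$; the content is identical.
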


\begin{proof} If $u(y) = 1$, then
\begin{displaymath} 1 = \sum_{x \in \calS} \psi_{x}(y) = \sum_{x \in \calS} \frac{\phi_{x}(y)}{\sum_{x' \in \calV_{n}} \phi_{x'}(y)} =  \frac{\sum_{x \in \calS(y)} \phi_{x}(y)}{\sum_{x' \in \calV_{n}(y)} \phi_{x'}(y)}.  \end{displaymath}
Hence
\begin{displaymath} \sum_{x \in \calS(y)} \phi_{x}(y) = \sum_{x' \in \calV_{n}(y)} \phi_{x'}(y), \end{displaymath}
which forces $\calS(y) = \calV_{n}(y)$. The converse implication is clear.

If $u(y) = 0$, then $\psi_{x}(y) = 0$ for all $x \in \calS$, so $\calS(y) = \emptyset$. Consequently, $\calV_{n}(y) \subset \calS^{c}$, as claimed. The converse implication is again clear.
\end{proof}

\begin{cor}\label{cor1} We have $u(s) = 1$ and $u(t) = 0$.
\end{cor}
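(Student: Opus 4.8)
The plan is to read the corollary off from Lemma~\ref{lemma2}, exploiting the standing reduction~\eqref{form18}. By Lemma~\ref{lemma2} it suffices to establish the two inclusions $\calV_{n}(s)\subseteq\calS$ and $\calV_{n}(t)\subseteq\calS^{c}$; indeed these give $\calS(s)=\calS\cap\calV_{n}(s)=\calV_{n}(s)$ and $\calS^{c}(t)=\calS^{c}\cap\calV_{n}(t)=\calV_{n}(t)$, whence $u(s)=1$ and $u(t)=0$ by the two equivalences in Lemma~\ref{lemma2}.

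First I would prove $\calV_{n}(s)\subseteq\calS$ by contradiction. Suppose $x\in\calV_{n}(s)$ with $x\notin\calS$. Since $s\in\calS$ by definition of a cut, $x\neq s$; and $x\in\calV_{n}(s)$ means $d(s,x)<2r_{n}$, so $x$ itself lies in $B(s,2r_{n})\cap B(x,2r_{n})$, which is therefore nonempty. Hence $(s,x)\in E_{n}$, and as $s\in\calS$ while $x\in\calS^{c}$ we get $(s,x)\in E_{n}(\calS,\calS^{c})$. But then $\min\{\dist(s,\{s,t\}),\dist(x,\{s,t\})\}=0$, which violates the lower bound $Cr_{n}>0$ imposed in~\eqref{form18}. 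This contradiction gives $\calV_{n}(s)\subseteq\calS$, hence $u(s)=1$. The statement $u(t)=0$ is completely symmetric: if some $x\in\calV_{n}(t)$ satisfied $x\in\calS$, then $d(x,t)<2r_{n}$ would put $t$ in $B(x,2r_{n})\cap B(t,2r_{n})$, so $(x,t)\in E_{n}(\calS,\calS^{c})$, and again $\dist(t,\{s,t\})=0<Cr_{n}$ contradicts~\eqref{form18}; thus $\calV_{n}(t)\subseteq\calS^{c}$ and Lemma~\ref{lemma2} yields $u(t)=0$.

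I do not anticipate any genuine obstacle here. The only points to keep in view are that the reduction~\eqref{form18} is legitimately available at this stage (the function $u$, and with it Lemma~\ref{lemma2}, was introduced only after~\eqref{form18} was assumed), and that the standing hypothesis $r_{n}\ll d(s,t)$ guarantees $t\notin\calV_{n}(s)$ and $s\notin\calV_{n}(t)$, so that the two claimed boundary values are not in conflict — though, as the argument above shows, any such coincidence would itself manifest as a forbidden short cut edge incident to $s$ or $t$, so even this remark is not strictly needed.
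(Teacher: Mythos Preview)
Your argument is correct and follows essentially the same route as the paper: both proofs invoke Lemma~\ref{lemma2} and derive a contradiction with the standing reduction~\eqref{form18} by exhibiting an edge in $E_{n}(\calS,\calS^{c})$ incident to $s$ (respectively $t$) whenever $u(s)<1$ (respectively $u(t)>0$). The only cosmetic difference is that the paper phrases the hypothesis as ``$\phi_{x}(s)>0$ for some $x\in\calS^{c}$'' rather than ``$x\in\calV_{n}(s)\setminus\calS$'', which amounts to the same thing.
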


\begin{proof} This is, in fact, a corollary of Lemma \ref{lemma2} and \eqref{form18}. Start with $s$: if $u(s) < 1$, then $\phi_{x}(s) > 0$ for some $x \in \calS^{c}$, hence $s \in B(x,2r_{n})$ by \eqref{form22}. On the other hand, $s \in \calS$ (by the very definition of a cut), so the fact that $s \in \spt \psi_x\subseteq  B(x,2r_{n})$ implies $(s,x) \in E_{n}(\calS,\calS^{c})$. This contradicts \eqref{form18} as soon as $C\geq 2$, and hence we deduce that $u(s) = 1$.

The treatment of $t$ is essentially symmetric: if $u(t) > 0$, then $\phi_{x}(t) > 0$ for some $x \in \calS$. But since $t \in \calS^{c}$, this implies that $(x,t) \in E_{n}(\calS,\calS^{c})$, again violating \eqref{form18}. \end{proof}

Next, still using Lemma \ref{lemma2}, we investigate where $\textup{Lip}(u,y) = 0$.
\begin{lemma} We have
\begin{equation}\label{lemma3} \{y \in B_{0} : \textup{Lip}(u,y) \neq 0\} \subset \bigcup_{x \in \emph{\textbf{Bd}}(\calS)} B(x,5r_{n}), \end{equation}
where
\begin{displaymath} \emph{\textbf{Bd}}(\calS) := \{x \in \calS : \exists \: x' \in \calS^{c} \text{ such that } (x,x') \in E_{n}\}, \end{displaymath} \end{lemma}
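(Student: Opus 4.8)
The plan is to show that if $y \in B_0$ has $\mathrm{Lip}(u,y) \neq 0$, then $y$ lies within $5r_n$ of some vertex $x \in \mathbf{Bd}(\calS)$. The contrapositive is cleaner: I will fix $y \in B_0$ and assume $y \notin \bigcup_{x \in \mathbf{Bd}(\calS)} B(x,5r_n)$, and then deduce $\mathrm{Lip}(u,y) = 0$. Since $\mathrm{Lip}(u,y) = \limsup_{r \to 0}\sup_{d(z,y)\le r}|u(y)-u(z)|/r$, it suffices to find a radius $\rho > 0$ (depending on $y$, but that is fine) such that $u$ is \emph{constant} on $B(y,\rho) \cap B_0$; then the difference quotients vanish for all small $r$.

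\textbf{Key steps.} First I would record the crucial observation: for any $z \in B_0$, the value $u(z) = \sum_{x \in \calS}\psi_x(z)$ depends only on $\phi_x(z)$ for $x \in \calV_n(z)$, and in particular, by Lemma~\ref{lemma2}, $u(z) \in \{0,1\}$ unless $\calV_n(z)$ meets both $\calS$ and $\calS^c$. So the set where $u$ is non-constant-locally is controlled by points $z$ whose ``active'' vertex set $\calV_n(z)$ is split by the cut. Second, I would show that if $\calV_n(z)$ contains a vertex $x \in \calS$ and a vertex $x' \in \calS^c$, then $z \in B(x,2r_n) \cap B(x',2r_n)$, so $d(x,x') < 4r_n$, hence $B(x,2r_n)\cap B(x',2r_n) \neq \emptyset$ and thus $(x,x') \in E_n$; therefore $x \in \mathbf{Bd}(\calS)$ and $z \in B(x,2r_n) \subset B(x, 5r_n)$. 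Third — and this is the step that needs the margin of $5r_n$ rather than $2r_n$ — I would argue that if $y$ is \emph{not} within $5r_n$ of any $x \in \mathbf{Bd}(\calS)$, then there is $\rho \in (0, r_n)$ with the property that \emph{every} $z \in B(y,\rho)$ has $\calV_n(z)$ entirely inside $\calS$ or entirely inside $\calS^c$, \emph{and moreover} this ``side'' is the same for all such $z$. The first half follows because if some $z \in B(y,\rho)$ had $\calV_n(z)$ split, then by step two $z$ would be within $5r_n$ of a $\mathbf{Bd}(\calS)$-vertex, putting $y$ within $5r_n + \rho < 6r_n$ of it — I would simply enlarge the stated radius, or more carefully note that $\calV_n$ is ``upper semicontinuous'' in the sense that $\calV_n(z) \subseteq \calV_n(y)$ for $z$ close enough to $y$ (since $y \in B(x,2r_n)$ is an open condition), so for small $\rho$ every active vertex of $z$ is already active for $y$; hence if $\calV_n(y)$ is unsplit then so is every $\calV_n(z)$, with the same side. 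Then $u \equiv 1$ or $u \equiv 0$ on $B(y,\rho) \cap B_0$, giving $\mathrm{Lip}(u,y) = 0$.

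\textbf{The cleanest route} is probably via that semicontinuity remark: since $\phi_x(y) > 0$ iff $d(y,x) < 2r_n$ is an open condition, choosing $\rho$ smaller than $\min\{2r_n - d(y,x) : x \in \calV_n(y)\}$ guarantees $\calV_n(y) \subseteq \calV_n(z)$ for all $z \in B(y,\rho)$; and choosing $\rho$ also smaller than $\min\{d(y,x) - 2r_n : x \in \calV_n \setminus \calV_n(y)\}$ (a positive quantity since $\calV_n$ is finite and $X_n$ is discrete) forces $\calV_n(z) \subseteq \calV_n(y)$, hence $\calV_n(z) = \calV_n(y)$ for all $z \in B(y,\rho)$. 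So it is enough to show $\calV_n(y)$ is unsplit when $y \notin \bigcup_{x \in \mathbf{Bd}(\calS)} B(x,5r_n)$: if $\calV_n(y)$ contained $x \in \calS$ and $x' \in \calS^c$, then $(x,x') \in E_n$ as in step two, so $x \in \mathbf{Bd}(\calS)$ and $y \in B(x,2r_n) \subseteq B(x,5r_n)$, contradiction. With $\calV_n$ locally constant and unsplit at $y$, Lemma~\ref{lemma2} gives $u$ locally constant, so $\mathrm{Lip}(u,y) = 0$.

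\textbf{The main obstacle} I anticipate is purely bookkeeping: being careful that $\calV_n(y)$ can genuinely be empty or touch only the boundary of $B_0$ (edge effects near $\partial B_0$), and making sure the two-sided choice of $\rho$ is legitimate, i.e. that $\calV_n \setminus \calV_n(y)$ contributes a strictly positive gap — this uses finiteness of $\calV_n$ and the fact that $d(y,x) \neq 2r_n$ can be arranged or handled by a strict/non-strict inequality argument (if $d(y,x) = 2r_n$ exactly then $\phi_x(y) = 0$ but $\phi_x$ may become positive arbitrarily close to $y$; however such $x$ has $\psi_x(y)=0$ and $\psi_x$ vanishes to first order, so it does not affect whether $u$ is locally constant — alternatively one checks directly that these boundary-touching vertices still lie in $B(x,5r_n)$ of $y$ and so are excluded). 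I would handle this by noting that any $x$ with $d(y,x) \le 2r_n$ trivially satisfies $y \in B(x,5r_n)$, so the hypothesis $y \notin \bigcup_{x\in\mathbf{Bd}(\calS)} B(x,5r_n)$ already rules out \emph{any} $\mathbf{Bd}(\calS)$-vertex being within $2r_n$ of $y$, which is exactly what the argument needs.
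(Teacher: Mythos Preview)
Your strategy is correct and would yield a complete proof, but the paper bypasses the ``boundary obstacle'' you anticipate by a different choice of neighbourhood. Rather than shrinking to a tiny ball $B(y,\rho)$ and tracking how $\calV_n(z)$ changes there, the paper fixes any $x\in\calV_n(y)$ and works on the whole ball $B(x,2r_n)$: for every $z\in B(x,2r_n)\cap B_0$ and every $x'\in\calV_n(z)$, the point $z$ itself lies in $B(x,2r_n)\cap B(x',2r_n)$, so $x'\in\calN(x):=\{x\}\cup\{x'':(x,x'')\in E_n\}$. Thus $\calV_n(z)\subset\calN(x)$ automatically, and if $\calN(x)$ lies entirely in $\calS$ or entirely in $\calS^c$ then Lemma~\ref{lemma2} makes $u$ constant on all of $B(x,2r_n)\cap B_0$, contradicting $\textup{Lip}(u,y)\neq 0$. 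Hence $\calN(x)$ is split for \emph{every} $x\in\calV_n(y)$, and one finishes by taking $x\in\calS$ if possible (then $x\in\textbf{Bd}(\calS)$ and $y\in B(x,2r_n)$), or else hopping once to a neighbouring vertex in $\calS$. Your semicontinuity route also works, but note that you only need the easy one-sided inclusion $\calV_n(y)\subset\calV_n(z)$ for small $\rho$, not equality: if $\calV_n(y)\subset\calS$ and some $z\in B(y,\rho)$ acquired a new vertex $x_0\in\calV_n(z)\cap\calS^c$, then for any $x_1\in\calV_n(y)\subset\calV_n(z)$ the point $z$ witnesses $(x_1,x_0)\in E_n$, so $x_1\in\textbf{Bd}(\calS)$ with $y\in B(x_1,2r_n)$ --- the contradiction you want, with no case $d(y,x)=2r_n$ to analyse (and in particular no appeal to ``$\psi_x$ vanishing to first order'', which is not quite right for a merely Lipschitz function).
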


\begin{proof} Pick $y \in B_{0}$ with $\textup{Lip}(u,y) \neq 0$. We claim that this has the following consequence: \emph{for all $x \in \calV_{n}(y)$, the set $\calN(x) := \{x\} \cup \{x' \in \calV_{n} : (x,x') \in E_{n}\}$ intersects both $\calS$ and $\calS^{c}$.}

Assume to the contrary that there is some $x \in \calV_{n}(y)$ with $\calN(x) \subset \calS$ or $\calN(x) \subset \calS^{c}$: we start with the case $\calN(x) \subset \calS$. Pick  $z \in B_{0} \cap B(x,2r_{n})$ arbitrarily, and consider any $x' \in \calV_{n}(z)$. Then $z \in B(x',2r_{n})$ by definition, so
\begin{displaymath} z \in B(x,2r_{n}) \cap B(x',2r_{n}) \quad \Longrightarrow \quad (x,x') \in E_{n} \quad \Longrightarrow \quad x' \in \calS. \end{displaymath}
This shows that $\calV_{n}(z) \subset \calS$, hence $u(z) = 1$ by Lemma \ref{lemma2}. But $z \in B_{0} \cap B(x,2r_{n})$ was arbitrary, so we have inferred that $u \equiv 1$ on the neighbourhood $B_{0} \cap B(x,2r_{n})$ of $y$. In particular $\Lip(u,y) = 0$, a contradiction.

Next, consider the case $\calN(x) \subset \calS^{c}$. As before, pick $z \in B_{0} \cap B(x,2r_{n})$ arbitrarily, and deduce as above that $x' \in \calS^{c}$ for all $x' \in \calV_{n}(z)$. This implies by Lemma \ref{lemma2} that $u(z) = 0$, and hence $u \equiv 0$ on $B_{0} \cap B(x,2r_{n})$. This contradicts $\Lip(u,y) \neq 0$.

Now that we have proven the claim in italics, we finish the proof of the lemma. Fix $y \in B_{0}$ with $\Lip(u,y) \neq 0$, pick any $x \in \calV_{n}(y)$, and assume first that $x \in \calS$. Then there exists $x' \in \calS^{c}$ with $(x,x') \in E_{n}$. Hence $x \in \textbf{Bd}(\calS)$ by definition, and $y \in B(x,2r_{n}) \subset B(x,5r_{n})$, as claimed. Next, if $x \in \calS^{c}$, then we have shown that there exists $x' \in \calS$ with $(x',x) \in E_{n}$. This means that $x' \in \textbf{Bd}(\calS)$. Since $B(x,2r_{n}) \cap B(x',2r_{n}) \neq \emptyset$, we infer that $y \in B(x',5r_{n})$, and the proof is complete. \end{proof}

We extend $u$ to an $L_{n}$-Lipschitz map $X \to \R$ without change in the notation. Then, we note that ($u$, $\textup{Lip}(u,\cdot)$) is a function - upper gradient pair on $X$, and we apply Theorem 9.5 in \cite{MR1800917}, more precisely the implication "$(4) \Longrightarrow (2)$", which requires the space $(X,d)$ to be geodesic. This implication gives the following estimate:
\begin{displaymath} 1 = |u(s) - u(t)| \lesssim \int_{B_{0}} \frac{\Lip(u,y)}{\Theta(s,d(s,y))} + \frac{\Lip(u,y)}{\Theta(t,d(t,y))} \, d\mu(y), \end{displaymath}
assuming that $s,t$ are "deep enough inside" the ball $B_{0}$. This can be arranged by choosing $C_{0} \geq 1$ in the definition of $B_{0}$ large enough. Then, if (by slight abuse of notation) we denote by \textbf{Bd}$(\calS)$ the set on the right hand side of \eqref{lemma3}, we obtain further
\begin{equation}\label{form20} 1 \lesssim \frac{1}{r_{n}} \int_{B_{0} \cap \textbf{Bd}(\calS)} \frac{1}{\Theta(s,d(s,y))} + \frac{1}{\Theta(t,d(t,y))} \, d\mu(y). \end{equation}
Using the definition of \textbf{Bd}$(\calS)$, and recalling \eqref{form15}-\eqref{form32}, we may continue the estimate \eqref{form20} as follows:
 \begin{align*} 1 & \lesssim \frac{1}{r_{n}} \sum_{x \in \textbf{Bd}(\calS)} \frac{\mu(B(x,r_{n}))}{\Theta(s,d(s,x))} + \frac{\mu(B(x,r_{n}))}{\Theta(t,d(t,x))}\\
 & \leq \sum_{(x,x') \in E_{n}(\calS,\calS^{c})} \frac{\Theta(x,r_{n})}{\Theta(s,d(s,x))} + \frac{\Theta(x',r_{n})}{\Theta(t,d(t,x'))} \lesssim c_{n}(\calS,\calS^{c}), \end{align*}
 recalling \eqref{form21} in the last inequality. This proves \eqref{form10}.

 By multiplying the $c_{n}$ by a constant (rational) factor, we may now arrange $c_{n}(\calS,\calS^{c}) \geq 1$ for all cuts $(\calS,\calS^{c})$ with $s \in \calS$ and $t \in \calS^{c}$. Then, we are in a position to apply the max flow min cut theorem: there exists a flow $f_{n} \colon E_{n} \to \R$ such that $\|f_{n}\| \geq 1$. Moreover, recalling \eqref{form8}, the norm of the flow $f_{n}$ is bounded from above by the capacity of the cut $(\{s\},\calV_{n} \setminus \{s\})$. Since there are only boundedly many edges in $E_{n}$ of the form $(s,x)$, $x \in \calV_{n}$, and the capacity of each one of them is $c_{n}(s,x) \sim 1$, we get
 \begin{equation}\label{flow} \|f_{n}\| \sim 1. \end{equation}

\subsection{Currents}\label{currentSection} In this section, we use the flow $f_{n}$ constructed above to find a metric current $T_{n}$ supported in a neighborhood of $B_{0}$. We define the current $T_{n}$ as follows. For all edges $e = (x,x') \in E_{n}$, let $\gamma_{e} \colon I_{e} := [0,d(x,x')] \to X$ be an isometric embedding with $\gamma_{e}(0) = x$ and $\gamma_{e}(d(x,x')) = x'$. Then $\calH^{1}(\gamma_{e}(I_{e})) = d(x,x') \sim r_{n}$. We define
\begin{displaymath} T_{n} := \sum_{e \in E_{n}} f_{n}(e)\gamma_{e\sharp}\llbracket I_{e} \rrbracket, \end{displaymath}
where $\llbracket I_{e} \rrbracket$ is the current discussed in Example \ref{intervalExample}.

First, we compute the boundary of $T_{n}$, based on the facts that the boundary operation is linear, and we already know (recall \eqref{form27}) the boundary of each term $\gamma_{e\sharp} \llbracket I_{e} \rrbracket$:
\begin{equation}\label{form23} \partial T_{n} = \sum_{e \in E_{n}} f_{n}(e) \partial(\gamma_{e\sharp} \llbracket I_{e} \rrbracket) = \sum_{(x,y) \in E_{n}} f_{n}(x,y) [\delta_{y} - \delta_{x}]. \end{equation}
To simplify the expression further, we use the flow property (F2) of $f_{n}$, which says that
\begin{displaymath} \sum_{(x,y) \in E_{n}} f_{n}(x,y) = 0, \qquad x \in \calV_{n} \setminus \{s,t\}. \end{displaymath}
It follows, using also the flow property (F1), namely $f(x,y) = -f(y,x)$, that if $x \in \calV_{n} \setminus \{s,t\}$, then the terms in \eqref{form23} containing $\delta_{x}$ cancel out:
\begin{displaymath} -\sum_{(x,y) \in E_{n}} f_{n}(x,y)\delta_{x} + \sum_{(y,x) \in E_{n}} f_{n}(y,x)\delta_{x} = -2\sum_{(x,y) \in E_{n}} f_{n}(x,y)\delta_{x} = 0. \end{displaymath}
Consequently, all that remains in \eqref{form23} are the terms containing $s$ and $t$:
\begin{equation}\label{boundary} \partial T_{n} = -2\sum_{(s,y) \in E_{n}} f_{n}(s,y)\delta_{s} - 2\sum_{(t,y) \in E_{n}} f_{n}(t,y)\delta_{t} = 2\|f_{n}\|\delta_{t} - 2\|f_{n}\|\delta_{s}. \end{equation}
In the last equation, we again used $f_{n}(t,y) = -f_{n}(y,t)$, and the little proposition stated in \eqref{normDef} that $\|f_{n}\| = f(\calS,\calS^{c})$ for any cut $(\calS,\calS^{c})$, in particular for $(\calS,\calS^{c}) = (\calV_{n} \setminus \{t\}, \{t\})$. Recalling \eqref{flow}, this yields that
\begin{equation}\label{form29} \|\partial T_{n}\|(X) \leq 4\|f_{n}\| \lesssim 1, \qquad n \in \N, \end{equation}
which in particular verifies that $T_n$ is a normal current.

Next, we estimate the measures $\|T_{n}\|$ and find a uniform upper bound for  $\|T_{n}\|(X)$. We recall from \eqref{form28} that $\|\gamma_{\sharp} \llbracket I_{e} \rrbracket \| = \calH^{1}\lfloor_{|\gamma_{e}|}$. It follows that
\begin{equation}\label{form24} \|T_{n}\| \leq \sum_{e \in E_{n}} |f_{n}(e)| \calH^{1}\lfloor_{\gamma_{e}(I_{e})} \leq \sum_{e \in E_{n}} c_{n}(e) \calH^{1}\lfloor_{\gamma_{e}(I_{e})}. \end{equation}
Recalling that $|f_{n}(e)| \leq c_{n}(e)$ (using the flow property (F3) and the fact that $c_{n}(x,x') = c_{n}(x',x)$ for all $(x,x') \in E_{n}$), we may now easily estimate $\|T_{n}\|(B)$ from above for all balls $B = B(x,r) \subset X$ with $r \geq r_{n}$. We claim that
\begin{equation}\label{form30} \|T_{n}\|(B) \lesssim \int_{10B} \frac{1}{\Theta(s,d(s,y))} + \frac{1}{\Theta(t,d(t,y))} \, d\mu(y), \qquad B = B(x,r) \subset X, \: r \geq r_{n}. \end{equation}
We start by disposing of a little technicality. Note that there are only boundedly many edges in $E_{n}$ of the form $(x,y)$ where $\min\{d(x,s),d(y,s)\} \leq 5r_{n}$. We denote these edges by $E_{n}(s)$, and we use the trivial estimate $c_{n}(e) \lesssim 1$ for all edges $e \in E_{n}(s)$. If $B$ happens to intersect $\gamma_{e}(I_{e})$ for one of the edges $e \in E_{n}(s)$, we estimate as follows:
\begin{displaymath} \|T_{n}\|(B) \lesssim \sum_{e \in E_{n} \setminus E_{n}(s)} c_{n}(e)\calH^{1}(\gamma_{e}(I_{e}) \cap B) + \sum_{e \in E_{n}(s)} r_{n}.\end{displaymath}
But if $B$ intersects $\gamma_{e}(I_{e})$ for an edge $e \in E_{n}(s)$, then $10B$ contains $B(s,r_{n})$, and the second term above is bounded from above by
the right hand side of \eqref{form30}:
\begin{displaymath}
 \sum_{e \in E_{n}(s)} r_{n} \lesssim \int_{B(s,r_{n})} \frac{d\mu(y)}{\Theta(s,d(s,y))} \leq \int_{10B} \frac{1}{\Theta(s,d(s,y))}+\frac{1}{\Theta(y,d(t,y))}\;d\mu(y),
\end{displaymath}
using in the first inequality that $d(s,y) \sim r_{n}$ for $y \in B(s,r_{n}) \setminus B(s,r_{n}/2) =: A(s,r_{n})$, and $\mu(A(s,r_{n})) \sim \mu(B(s,r_{n}))$ by the doubling hypothesis (this also requires $A(s,r_{n}) \neq \emptyset$, which easily follows from the path connectedness of $(X,d)$; see also \cite[(8.1.17)]{MR3363168}). We may dispose similarly of the situation where $B$ meets $\gamma_{e}(I_{e})$ for some $e \in E_{n}(t)$ (defined in the same way as $E_{n}(s)$). In other words, it remains to estimate the sum
\begin{equation}\label{form31} \sum_{e \in E_{n} \setminus (E_{n}(s) \cup E_{n}(t))} c_{n}(e)\calH^{1}(\gamma_{e}(I_{e}) \cap B) \lesssim \sum_{(x,y) \in E_{n}(B)} \frac{\mu(B(x,r_{n}))}{\Theta(s,d(s,x))} + \frac{\mu(B(x,r_{n}))}{\Theta(t,d(t,x))}. \end{equation}
where $E_{n}(B) := \{e \in E_{n} \setminus [E_{n}(s) \cup E_{n}(t)] : \gamma_{e}(I_{e}) \cap B \neq \emptyset\}$. We used in \eqref{form31} that
\begin{displaymath} d(t,y) \sim d(t,x) \quad \text{and} \quad \mu(B(y,r_{n})) \sim \mu(B(x,r_{n})), \qquad  (x,y) \in E_{n}(B). \end{displaymath}
To proceed, we recall again that every $x \in \calV_{n}$ only has boundedly many neighbours in $G_{n}$, and all the vertices $x \in \calV_{n}$ with at least one edge $(x,y) \in E_{n}(B)$ must lie at distance $\leq r_{n}$ from $B$, and at distance $\geq 5r_{n}$ from $\{s,t\}$. We denote the collection of such vertices by $\calV_{n}(B)$. The observations above, and the bounded overlap of the balls $B(x,r_{n})$, $x \in \calV_{n}$, allow us to continue \eqref{form31} as follows:
\begin{displaymath} \eqref{form31} \lesssim \sum_{x \in \calV_{n}(B)} \frac{\mu(B(x,r_{n}))}{\Theta(s,d(s,x))} + \frac{\mu(B(x,r_{n}))}{\Theta(t,d(t,x))} \lesssim \int_{10B} \frac{1}{\Theta(s,d(s,y))} + \frac{1}{\Theta(t,d(t,y))} \, d\mu(y). \end{displaymath}
Here we used, once again, that $d(s,y) \sim d(s,x)$ and $d(t,y) \sim d(t,x)$ for all $y \in B(x,r_{n})$, whenever $\dist(x,\{s,t\}) \geq 5r_{n}$. This concludes the proof of \eqref{form30}.

Finally, since the sets $\gamma_{e}(I_{e})$, $e \in E_{n}$, are geodesics connecting vertices in $\calV_{n}$, we infer that
\begin{displaymath} \spt T_{n} = \spt \|T_{n}\| \subset B(s,C_{0}d(s,t) + 5r_{n}) \subset 2B_{0}. \end{displaymath}
In particular, the supports of the currents $T_{n}$ are contained in the fixed compact set $\overline{2B_{0}}$, and
\begin{align}\label{form33} \|T_{n}\|(X) = \|T_{n}\|(2B_{0}) & \lesssim \int_{20B_{0}} \frac{1}{\Theta(s,d(s,y))} + \frac{1}{\Theta(t,d(t,y))} \, d\mu(y)\\
& \lesssim \sum_{2^{-j} \leq 2\diam(20B_{0})} 2^{-j} \left[ \frac{\mu(A(s,2^{-j}))}{\mu(B(s,2^{-j}))} + \frac{\mu(A(t,2^{-j}))}{\mu(B(t,2^{-j}))} \right] \lesssim \diam(B_{0}), \notag\end{align}
by \eqref{form30} and the doubling property of $\mu$. Recalling also \eqref{form29}, we are now in a position to use the compactness theorem for normal currents, Theorem \ref{compactness}: there exists a normal current $T$, supported on $\bar{B}_{0}$, such that
\begin{equation}\label{convergence} \lim_{m \to \infty} T_{n_{m}}(g,\pi) = T(g,\pi), \qquad (g,\pi) \in \Lip_{b}(X) \times \Lip(X), \end{equation}
for some subsequence $(n_{m})_{m \in \N}$. From the proof of the compactness theorem (\cite[Theorem 5.2]{MR1794185}) in the paper of Ambrosio and Kirchheim, in particular ``Step 2'', one can read that the measure $\|T\|$ is bounded from above by a certain measure which is obtained as a weak limit of the measures $\|T_{n_{m}}\|$. As a consequence, we infer that \eqref{form30} holds for the measure $\|T\|$, and for all balls $B(x,r) \subset X$. This fact, combined with the Lebesgue differentiation theorem, shows that $\|T\|$ is absolutely continuous with respect to the measure $\mu$, with Radon-Nikodym derivative bounded by
\begin{displaymath} \frac{d\|T\|}{d\mu}(x) \lesssim \frac{1}{\Theta(s,d(s,x))} + \frac{1}{\Theta(t,d(t,x))}. \end{displaymath}
So, to conclude the proof of Theorem \ref{main2}, it remains to show that $\partial T = C\delta_{t} - C\delta_{s}$ for some constant $C \sim 1$. From \eqref{convergence} we infer that
\begin{displaymath} \partial T(g) = \lim_{m \to \infty} \partial T_{n_{m}}(g) = \lim_{m \to \infty} (\|f_{n_{m}}\|g(t) - \|f_{n_{m}}\|g(s)), \qquad g \in \Lip_{b}(X). \end{displaymath}
Apply this to a bump function $g$ satisfying $g(t) = 1$ and $g(s) = 0$ to find that the numbers $\|f_{n_{m}}\| \sim 1$ converge to a limit $C = \partial T(g)$, which evidently also satisfies $C \sim 1$. Thus $\partial T(g) = Cg(t) - Cg(s)$ for all $g \in \Lip_{b}(X)$, which by definition means that $\partial T = C\delta_{t} - C\delta_{s}$. Finally, to be precise, the current mentioned in the definition of generalised pencil of curves can be obtained by dividing $T$ by $C$. The proof of Theorem \ref{main2} is complete.

\section{GPC and pencils of curves}\label{s:final}

In this section, we prove Theorem \ref{mainPCs}. As explained in the introduction, the implication
\begin{displaymath} X \text{ admits PCs} \quad \Longrightarrow \quad X \text{ satisfies the weak $1$-Poincar\'e inequality} \end{displaymath}
is due to Semmes for $Q$-regular spaces $(X,d,\mu)$, see \cite[Theorem B.15]{MR1414889}, and a proof of the general case can be found in \cite[Chapter 4]{MR1800917}. So, we concentrate on the reverse implication. By the main result of this paper, Theorem \ref{main}, it suffices to verify the following:
\begin{displaymath} X \text{ admits GPCs} \quad \Longrightarrow \quad X \text{ admits PCs}. \end{displaymath}
The proof is a straightforward application of a decomposition result \cite[Theorem 5.1]{MR2984069} of Paolini and Stepanov, which we now recall for the reader's convenience. Let $C,T$ be $1$-currents on $X$. We say that $C$ is a \emph{cycle} of $T$, if $C \leq T$ (recall Definition \ref{subcurrent}), and $\partial C = 0$. A $1$-current $T$ is \emph{acyclic}, if its only cycle is the trivial $1$-current $C = 0$.

By a \emph{curve} in $X$, we mean the image of a Lipschitz map $\theta \colon [0,1] \to X$. The space of curves in $X$ is denoted by $\Gamma$. We slightly abuse notation by using the letter $\theta$ to denote both Lipschitz mappings $[0,1] \to X$, and elements in $\Gamma$. There is a natural metric $d_{\Gamma}$ on $\Gamma$, defined in \cite[(2.1)]{MR2984069}, and Borel sets in $\Gamma$ are defined using the topology induced by $d_{\Gamma}$. Following the terminology above \cite[(4.1)]{MR2984069}, a finite positive Borel measure on $\Gamma$ is called a \emph{transport}. If $\theta \in \Gamma$ has an injective parametrisation, then $\theta$ is called an \emph{arc}.

By \cite[Theorem 5.1]{MR2984069}, a normal acyclic $1$-current $T$ on $X$ is decomposable in arcs. Combining \cite[Definition 4.4]{MR2984069} and \cite[Lemma 4.17]{MR2984069}, this means that there exists a transport $\eta$ on $\Gamma$ such that $\eta$ almost every curve in $\Gamma$ is an arc, and moreover the following equalities hold:
\begin{equation}\label{form25} \|T\| = \int_{\Gamma} \calH^{1}|_{\theta} \, d\eta(\theta), \end{equation}
and
\begin{equation}\label{form26} \eta(1) = (\partial T)^{+} \quad \text{and} \quad \eta(2) = (\partial T)^{-}. \end{equation}
If the reader checks carefully \cite[Lemma 4.17]{MR2984069}, then she will find the measure "$\mu_{[[\theta]]}$" (in our notation $\| [[\theta]]\|$) in place of "$\calH^{1}|_{\theta}$" in \eqref{form25}, but these measures coincide if $\theta$ is an arc. Indeed, in that case  \cite[Lemma 2.2]{MR3125271} shows that $\mu_{[[\theta]]} = \theta_{\sharp} (|\dot{\theta}|\mathcal{L}^1)$, where $|\dot{\theta}|$ denotes the metric derivative. Then it can be seen for instance by \cite[Theorem 4.1.6]{MR2039660} that $\theta_{\sharp} (|\dot{\theta}|\mathcal{L}^1)$ agrees with $\mathcal{H}^1$.

The measures $(\partial T)^{+}$ and $(\partial T)^{-}$ appearing in \eqref{form26} are the positive and negative parts of the measure $\partial T$, and $\eta(i)$ is the measure defined by
\begin{displaymath} \eta(i)(A) = \eta\{\theta \in \Gamma : \theta(i) \in A\}, \qquad i \in \{0,1\}. \end{displaymath}

Now, we apply the decomposition result to our concrete situation. Assume that $s,t \in X$, and $T$ is a GPC joining $s$ to $t$, as in Definition \ref{GPC}. There is no guarantee that $T$ is acyclic, but \cite[Proposition 3.8]{MR2984069} comes to our rescue: it gives a cycle $C \leq T$ such that $T' = T - C$ is acyclic. Clearly
\begin{displaymath} \partial T' = \partial T - \partial C = \partial T = \delta_{t} - \delta_{s}. \end{displaymath}
Moreover, $\|T'\| + \|C\| = \|T\|$ by \cite[(3.1)]{MR2984069}, so in particular $\|T'\| \leq \|T\|$. It follows that $T'$ is an acyclic GPC joining $s$ to $t$. With this argument in mind, we may assume that $T$ was acyclic to begin with.

Hence, a decomposition as in \eqref{form25}-\eqref{form26} exists. In particular, \eqref{form26} implies that
\begin{displaymath} \eta\{\theta \in \Gamma : \theta(1) = t\} = \eta(1)(\{t\}) = (\partial T)^{+}(\{t\}) = \delta_{t}(\{t\}) = 1, \end{displaymath}
and similarly $\eta\{\theta \in \Gamma : \theta(0) = s\} = 1$. So, there exists a set $\Gamma_{s,t} \subset \Gamma$ of arcs of full $\eta$ measure such that $\theta(0) = s$ and $\theta(1) = t$ for all $\theta \in \Gamma_{s,t}$. Moreover, if $g \colon X \to [0,\infty]$ is a Borel function, then \eqref{form25} shows that
\begin{displaymath} \int_{\Gamma_{s,t}} \int_{\theta} g \, d\calH^{1} \, d\eta(\theta) = \int g \, d\|T\| \lesssim \int_{B(s,C_0 d(s,t))} \frac{g(y)}{\Theta(s,d(s,y))} + \frac{g(y)}{\Theta(t,d(t,y))} \, d\mu(y),  \end{displaymath}
taking into account that $\spt \|T\| \subset B(s,C_{0}d(s,t))$ by (P2). The only point we are still missing from Definition \ref{PC} is the claim that the arcs in $\Gamma_{s,t}$ are quasiconvex. This need not be true to begin with, but we note that
\begin{displaymath} \|T\|(X) \lesssim d(s,t). \end{displaymath}
This can be easily seen from (P2)-(P3). Alternatively, our specific construction for $T$ gives this estimate, see \eqref{form33}. So, it follows from \eqref{form25} that half of the arcs $\theta \in \Gamma$, relative to the measure $\eta$, satisfy the quasiconvexity requirement $\calH^{1}(\theta) \lesssim d(s,t)$. The proof is now completed by restricting $\Gamma_{s,t}$ to this "good half" of the arcs.

\bibliographystyle{plain}
\bibliography{references}

\end{document}